\tikzset{
    %Define standard arrow tip
    >=stealth',
    %Define style for boxes
%    main node/.style={
%           %circle,
%           %fill=black!10,
%           draw=black, %very thick,
%           %text width=5.5em,
%           minimum height=0.1em,
%           minimum size=0.1cm,
%           text centered},    
    % Define arrow style
    pil/.style={
           ->,
           %thick,
           shorten <=2pt,
           shorten >=2pt,},         
}
\newcommand*\rel@kern[1]{\kern#1\dimexpr\macc@kerna}
\newcommand*\overwidebar[1]{%
  \begingroup
  \def\mathaccent##1##2{%
    \rel@kern{0.8}%
    \overline{\rel@kern{-0.8}\macc@nucleus\rel@kern{0.2}}%
    \rel@kern{-0.2}%
  }%
  \macc@depth\@ne
  \let\math@bgroup\@empty \let\math@egroup\macc@set@skewchar
  \mathsurround\z@ \frozen@everymath{\mathgroup\macc@group\relax}%
  \macc@set@skewchar\relax
  \let\mathaccentV\macc@nested@a
  \macc@nested@a\relax111{#1}%
  \endgroup
}
\newcommand*\underwidebar[1]{% this could be used instead of \underline, but I didn't change anything here
  \begingroup
  \def\mathaccent##1##2{%
    \rel@kern{0.2}%
    \underline{\rel@kern{-0}\macc@nucleus\rel@kern{-0.8}}%
    \rel@kern{0.6}%
  }%
  \macc@depth\@ne
  \let\math@bgroup\@empty \let\math@egroup\macc@set@skewchar
  \mathsurround\z@ \frozen@everymath{\mathgroup\macc@group\relax}%
  \macc@set@skewchar\relax
  \let\mathaccentV\macc@nested@a
  \macc@nested@a\relax111{#1}%
  \endgroup
} 
\DeclarePairedDelimiter\abs{\lvert}{\rvert}%
\DeclarePairedDelimiter\norm{\lVert}{\rVert}%
\DeclarePairedDelimiter\pair{\langle}{\rangle}%
\let\oldabs\abs
\def\abs{\@ifstar{\oldabs}{\oldabs*}}
\let\oldnorm\norm
\def\norm{\@ifstar{\oldnorm}{\oldnorm*}}
\let\oldpair\pair
\def\pair{\@ifstar{\oldpair}{\oldpair*}}
\numberwithin{equation}{section}
\theoremstyle{plain}
\newtheorem{theorem}{Theorem} [section] 
\newtheorem{lemma}[theorem]{Lemma} 
\newtheorem{proposition}[theorem]{Proposition}
\theoremstyle{definition}
\theoremstyle{remark}
\newtheorem{remark}{Remark}[section]
\newcommand{\eps}{\varepsilon}
\title{The potential roles of transacylation in intracellular lipolysis and 
related QSSA approximations}
\author[1,2]{J\'{a}n Elia\v{s}\thanks{Email: janelias@ymail.com}}
\author[2]{Klemens Fellner\thanks{Email: klemens.fellner@uni-graz.at}}
\author[3]{Peter Hofer\thanks{Email: peter.hofer@uni-graz.at}}
\author[3,4,5]{Monika Oberer\thanks{Email: m.oberer@uni-graz.at}}
\author[3,5]{Renate Schreiber\thanks{Email: renate.schreiber@uni-graz.at}}
\author[3,4,5]{Rudolf Zechner\thanks{Email: rudolf.zechner@uni-graz.at}}
\affil[1]{Boehringer Ingelheim RCV GmbH \& Co KG, Dr. Boehringer-Gasse 5-11, 1121 Vienna, Austria}
\affil[2]{Institute of Mathematics and Scientific Computing, University of Graz, Heinrichstrasse 36, 8010 Graz, Austria}
\affil[3]{Institute of Molecular Biosciences, University of Graz, Humboldtstraße 50, 8010 Graz, Austria}
\affil[4]{BioTechMed Graz, 8010 Graz, Austria}
\affil[5]{BioHealth Field of Excellence, University of Graz, 8010 Graz, Austria}
\date{\small \today}                  
\begin{document}

\maketitle

\begin{abstract} 
Fatty acids (FAs) are crucial energy metabolites, signalling molecules, and membrane building blocks for a wide range of organisms. Adipose triglyceride lipase (ATGL) is the first and presumingly most crucial regulator of FA release from triacylglycerols (TGs) stored within cytosolic lipid droplets. However, besides the function of releasing FAs by hydrolysing TGs into diacylglycerols (DGs), ATGL also promotes the transacylation reaction of two DG molecules into one TG and one monoacylglycerol molecule. To date, it is unknown whether DG transacylation is a coincidental byproduct of ATGL-mediated lipolysis or whether it is physiologically relevant. Experimental evidence is scarce since both, hydrolysis and transacylation, rely on the same active site of ATGL and always occur in parallel in an ensemble of molecules. 

This paper illustrates the potential roles of transacylation.
It shows that, depending on the kinetic parameters but also on the 
state of the hydrolytic machinery, transacylation can increase or decrease downstream products up to 50\% and more.
We provide an extensive asymptotic analysis including 
quasi-steady-state approximations (QSSA) with higher order correction terms and provide numerical simulation. We also argue that when assessing the validity of QSSAs one should include parameter sensitivity derivatives. Our results suggest that the transacylation function of ATGL is of biological relevance by providing feedback options and altogether stability to the lipolytic machinery in adipocytes.

\medskip

\noindent \textit{Keywords: Lipolysis; Enzyme reaction; Transacylation; Michaelis-Menten; Quasi steady state approximation; Simulation}

\medskip

\noindent \textit{2020 MSC}: 34C60, 37N25, 92C40, 92C45.

\end{abstract}

%%%%%%%
\section{Introduction}
%%%%%%%

Fatty acids (FAs) are crucial for ATP production, synthesis of biological membranes, thermogenesis, and signal transduction \cite{Glatz-2014, Zechner-2012}. Animals and humans store FAs in the form of water-insoluble triacylglycerols (TGs, one species thereof is triolein, TO) within cytosolic lipid droplets (LDs) of specialised fat cells called adipocytes, but also in other cell types. From a biochemical perspective, a TG molecule is composed of three FAs esterified to a glycerol backbone. LD-associated TGs are degraded step-wise by an enzymatic cascade of lipid hydrolases commonly designated as "lipases" in a process called intracellular lipolysis.

Distinct lipases are responsible for each lipolytic step \cite{Grabner-2021, Young-2013}. The initial step in the degradation of TGs is catalysed by adipose triglyceride lipase (ATGL) releasing one FA and one diacylglycerol (DG, one species thereof is diolein, DO) molecule \cite{Schreiber-2019}. In the second lipolytic step, hormone-sensitive lipase (HSL) hydrolyses  DGs into FAs and monoacylglycerols (MGs; one species thereof is monoolein, MO)  \cite{Recazens-2021}. Finally, monoglyceride lipase (MGL) hydrolyses MGs into FAs and glycerol, both of which can be secreted from the adipocytes into the bloodstream to supply remote organs. 

To ensure an adequate response to changes in the metabolic state of an organism, lipolysis is fine-tuned by a large number of molecular factors regulating the three lipases \cite{Hofer-2020}. One of these factors is comparative gene identification-58 (CGI-58) which accelerates lipolysis by increasing ATGL activity \cite{Lass-2006}. Despite literally thousands of studies on the physiological role of lipolysis and its complex regulation, a quantitative description of the intricate reaction processes of TG hydrolysis is, however, still missing.

\medskip

Given the outstanding metabolic importance of lipolysis, it is remarkable that ATGL (and possibly HSL \cite{Zhang-2019}) not only catalyse the hydrolysis reaction but in parallel also a transacylation reaction, which transfers a FA from one DG onto a second DG to generate TG and MG \cite{Jenkins-2004, Zhang-2019, Kulminskaya-2021}. 
%The importance and potential role of transacylation compared to 
%the streamlined hydrolysis of TG into DG into MG is unknown. 
In fact, measurement of ATGL activity is far from being trivial due its hydrolase and transacylase activity occurring simultaneously. Chemical inhibition of ATGL \cite{Kulminskaya-2021} as well as mutations in its active site \cite{Patel-2022} abolish both activities, implying that it is experimentally not possible to separate one activity from the other. For example, as soon as a TG molecule formed by DG transacylation is hydrolysed to DG, then the effective outcome (one FA and MG were formed) is identical to the outcome of a DG hydrolysis event, thereby masking DG transacylase activity. 

The importance and potential metabolic function of transacylation 
%alongside to the streamlined hydrolysis of TG into DG into MG 
is unknown. 
By its function, the transacylation reaction provides feedback towards the TG pool
as well as an alternative in creating MG alongside DG hydrolysis.  
One may believe that transacylation plays a minor to no role in TG metabolism.
%yet from the level of  accuracy of current experiments, it seems not really possible to substantiate such a claim. 
However, \textit{in vitro} experiments from our laboratory (Figure~\ref{fig:DO-Assay}) and others \cite{Jenkins-2004, Kulminskaya-2021} showed remarkable ATGL-mediated DG transacylation. %It is even unclear if DG hydrolysis is happening at all.     
 
\begin{figure}[!htb]  
\centering
	\hspace{-0.5cm} \includegraphics[width=0.55\textwidth]{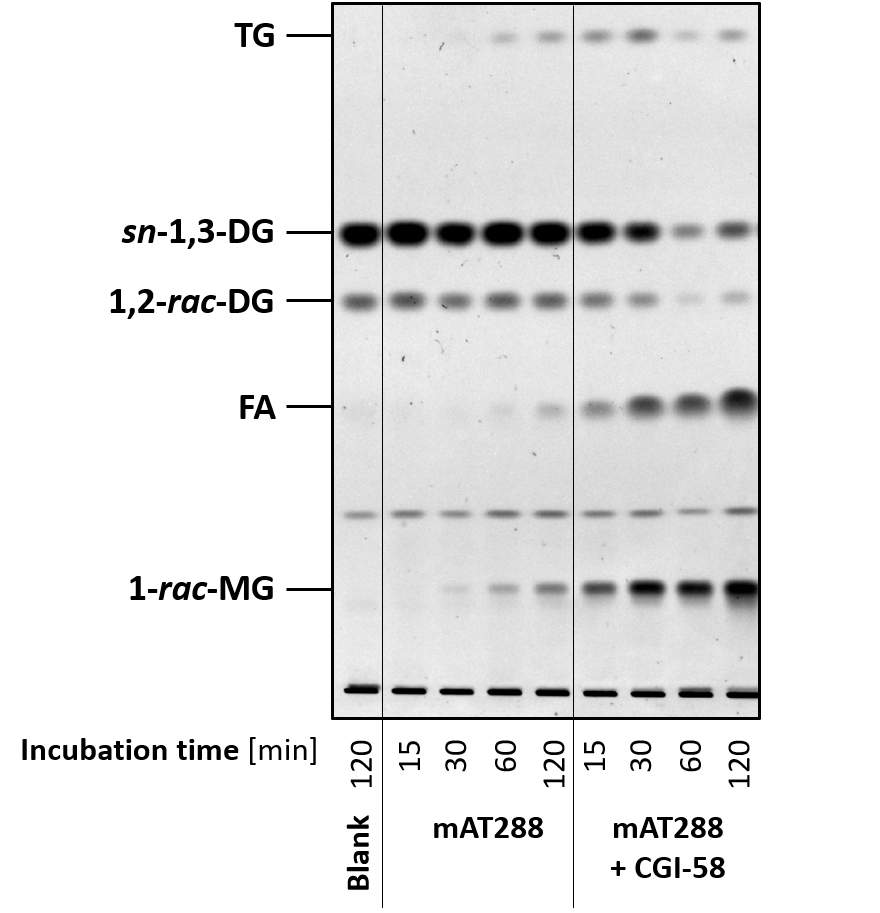} \hspace{-1.5cm}
 	\caption[]{\textbf{Catalytic activities of ATGL towards DGs.} Truncated, purified mouse ATGL covering amino acids 1-288 (mAT288) was incubated with a phospholipid-emulsified, 0.3 mM DO (a particular DG) substrate in the absence and presence of its physiological co-activator CGI-58. Incubation of the DO substrate with an equal volume of buffer served as negative control (blank). At the given time points, reactions were stopped and total lipids were extracted with CHCl\textsubscript{3}:MeOH=2:1. Lipids were resolved by thin layer chromatography using the solvent CHCl\textsubscript{3}:acetone:acetic acid=88:12:1 and visualised by charcoaling at 140$^{\circ}$C.}
  	\label{fig:DO-Assay}
 \end{figure}
 
 The experimental procedure behind obtaining Figure~\ref{fig:DO-Assay} uses a well-defined substrate of artificial LDs  made of DO (a particular DG) and purified ATGL as sole lipase. 
More precisely, a truncated variant of mouse ATGL (mAT288) was used since ATGL is biochemically challenging enzyme 
and removing a functionally non-essential part of ATGL helps it's purification. 
Figure~\ref{fig:DO-Assay} shows that when mAT288
was incubated with an emulsion of DO LDs, significant amounts of TO were formed both in absence or presence of the ATGL endogenous co-activator CGI-58. As  mentioned above, it is impossible to directly distinguish 
DO transacylation and DO hydrolysis in Figure~\ref{fig:DO-Assay}, since TOs produced by transacylation are later hydrolysed by ATGL to DOs and FAs.

\medskip

In this paper we study the potential roles of transacylation in relation to the 
hydrolytic pathway TG $\to$ DG $\to$ MG as sketched in Figure~\ref{fig:reactionnetwork}. 
More precisely, we consider three activities: TG hydrolysis, DG hydrolysis and DG transacylation. We will not include the final step of lipolysis, in which MGL hydrolyses MG to glycerol and FA, since this step is purely downstream and does not interact with 
the question of understanding the role of DG transacylation. 

\begin{figure}
\centering
%\begin{tikzpicture}[auto,]
\begin{tikzpicture}[thick,scale=1.2, every node/.style={transform shape}]

\draw (-0.2,0) node {$TG$};
\draw [->] (0.2,0) -- (1.6,0);
\draw (2,0) node {$DG$};
\draw [->] (2.4,0) -- (3.8,0);
\draw (4.3,0) node {$MG$};
\draw (1,0.2) node {$m_1$}
	 (3.2,0.2) node {$m_2$};
\draw [->] (0.6,0) to [out=+90,in=180] (1.1,+0.6);
\draw [->] (2.8,0) to [out=+90,in=180] (3.3,+0.6);
\draw (1.4,+0.6) node {$FA$}
	 (3.6,+0.6) node {$FA$};
\draw (2,-0.8) node {$2 DG$};
\draw [->] (2,-1.05) to [out=-110,in=0] (0.3,-1.6);
\draw (-0.2,-1.6) node {$TG$};
\draw [->] (2,-1.05) to [out=-70,in=180] (3.6,-1.6);
\draw (4.2,-1.6) node {$MG$};
\draw (2,-1.5) node {$\tau$};
	 %(0.9,-1) node {$\sigma$}
	 %(3.1,-1) node {$\sigma$};
 \end{tikzpicture}
\caption{$m_1$ rate of TG hydrolysis, $m_2$ rate of DG hydrolysis, $\tau$ rate of transacylation.}
\label{fig:reactionnetwork}
\end{figure}
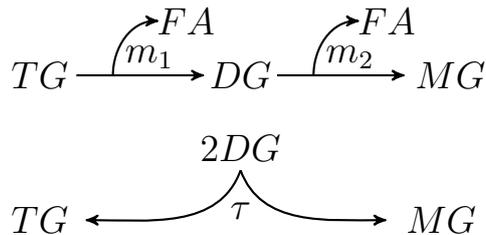

%TG hydrolysis is primarily catalyzed by ATGL showing robust TG hydrolase activity especially in presence of CGI-58 \cite{Lass-2006}. Accordingly, ATGL \cite{Haemmerle-2006} as well as CGI-58 knockout mice \cite{Radner-2010} accumulate TGs to supraphysiological levels in various tissues.  However, also HSL shows some TG hydrolase activity \cite{Fredrikson-1981}. DG transacylation is mainly governed by ATGL \cite{Jenkins-2004, Zhang-2019, Kulminskaya-2021}. DG hydrolysis is believed to be primarily catalysed by HSL, but also ATGL is capable to hydrolyse DGs \cite{Zimmermann-2004}. 

TG hydrolysis is primarily catalyzed by ATGL showing robust TG hydrolase activity especially in presence of CGI-58 \cite{Lass-2006}. Accordingly, ATGL \cite{Haemmerle-2006} as well as CGI-58 knockout mice \cite{Radner-2010} accumulate TGs to supraphysiological levels in various tissues. However, ATGL can also act as  DG transacylase  \cite{Jenkins-2004, Zhang-2019, Kulminskaya-2021} and DG hydrolase \cite{Zimmermann-2004}, indicating that it is also involved in the second lipolytic step. Nevertheless, DG hydrolysis is believed to be primarily catalysed by HSL. Adding to the complexity of lipolysis, HSL also exhibits minor but empirically proven TG hydrolase activity \cite{Fredrikson-1981}.  

For the sake of simplicity and clarity, we will not detail the activities of ATGL and HSL, whose relative contributions are 
also not sufficiently well understood. Instead, we will combine the functions together into single reaction rates for TG hydrolysis, 
DG transacylation and DG hydrolysis, see Figure~\ref{fig:reactionnetwork}.
Specifically, we postulate Michaelis-Menten (MM) kinetics for TG and DG hydrolysis and 
mass action law kinetics (MAL) for DG transacylation.
%see \eqref{eq:01} below. 

However, both approaches have drawbacks. Firstly, choosing MM kinetics should not be considered as an ultimate 
modelling choice: Despite the fact that MM kinetic rates for TG and DG hydrolysis can be found in the literature (see e.g. \cite{Viertlmayr-2018,Kim-2008}  and the references therein)
already the mere fact that transacylation occurs in parallel to hydrolysis
implies that hydrolysis can at best be approximatively described 
by MM kinetics. %Moreover, the standard protocols for measuring the kinetic MM
%parameters are not really applicable to ATGL, except in an approximative sense. 
In favour of our modelling choice, the data in 
\cite{Viertlmayr-2018} also show that MM kinetics for TG hydrolysis 
is still a decent approximation of the real, more complicated dynamics. 

Secondly, using MAL as reaction rate model for an enzymatic transacylation reaction
is also questionable. However, we compared parameter fittings 
for preliminary \textit{in vitro} experimental data using mAT288 incubated with a phospholipid-emulsified TG substrate and found that using MM or Hill kinetics 
instead of MAL didn't improve the (already good) fitting quality obtain with the MAL model, and despite 
involving two resp. three fitting parameters instead of one. Hence, Occam's razor compels us to use a MAL transacylation rate, at least until the molecular 
mechanism of transacylation becomes better understood. 

From a mathematical viewpoint, the stoichiometry of DG hydrolysis (involving one DG molecule) is different to DG transacylation (involving two DG molecules), which implies different nonlinearities of the respective reaction rates. Hence, the relative contributions
of DG hydrolysis and DG transacylation  
depend on the DG concentration, which in return depends on 
the kinetics producing and processing DGs: We are faced with a complex  nonlinear system behaviour. 
Finally, we also emphasise that we believe that the analysis of this paper can be extended to general
superlinear and monotone increasing models for DG transacylation rates.
%This is also due to the fact that the DG concentration $q$  
%remains bounded during the time evolution and that it seems that
%the qualitative behaviour of the $q$ dynamics would qualitatively 
%remain the same for such transacylation rates.

\medskip
 
Let us denote the substrate concentrations $s = [TG]$, 
the intermediate concentration $q = [DG]$, the product concentration $p = [MG]$ and the concentration of fatty acids $f = [FA]$. Then, we consider 
the reaction rates
\begin{equation}
m_1(s) = \frac{V_1 s}{K_1+s} \quad \text{and} \quad m_2(q) = \frac{V_2 q}{K_2+q}
\quad \text{and} \quad \tau(q) = \sigma q^2,
\end{equation}
where $V_1$, $K_1$ and $V_2$, $K_2$ are the corresponding maximum velocities and Michaelis constants of the TG and DG hydrolysis 
and $\sigma$ denotes the MAL constant for the transacylation 2DG $\to$ 
TG + MG. Note that it is plausible to assume constant concentrations of ATGL and HSL. Hence, there is no explicit dependence of the parameter  $V_1$, $\sigma$ and $V_2$ on ATGL or HSL.

\medskip

The system of kinetic equations is as follows
%\vspace{-0.1cm}
\begin{equation}\label{eq:01}
\begin{cases}
\begin{aligned}
     	&\dot{s} = - m_1 + \tau &&= -\frac{V_1 s}{K_1 + s} + \sigma q^2,\\
    	&\dot{q} = m_1 - m_2 - 2 \tau &&= \frac{V_1 s}{K_1 + s} -\frac{V_2 q}{K_2 + q} - 2 \sigma q^2, \\
    	&\dot{p} = m_2 + \tau &&= \frac{V_2 q}{K_2 + q} + \sigma q^2,\\
	&\dot{f} = m_1 + m_2 &&= \frac{V_1 s}{K_1 + s} + \frac{V_2 q}{K_2 + q}, \\
\end{aligned}
\end{cases}
\end{equation}
and we assume that
\[s(0) = s_{0} > 0, \quad q(0) = q_0 \ge 0, \quad p(0) = 0 \quad \text{and} \quad f(0) = 0.\]
We remark that $p$ and $f$ are downstream quantities, which are calculated 
from the solutions of the closed equations for $s$ and $q$. Therefore, a big part of our analysis will focus on the first two equations. 

The substrate TG will eventually be fully hydrolysed into products MG and FA through the processes in the reaction scheme in Figure~\ref{fig:reactionnetwork}. It is fully hydrolysed even in the case when $\tau = 0$. The cascade of MM enzyme reactions in Figure~\ref{fig:reactionnetwork} with $\tau = 0$ has been previously studied, \cite{Storer-1974}. 

In either cases, the total concentrations of fatty acid and glycerol moieties are conserved in time, which leads to two conservations laws
\begin{equation}\label{eq:masscons}
3s + 2q + p + f = 3 s_0 + 2 q_0 \qquad \text{and} \qquad s + q + p = s_0 + q_0 
\end{equation}
for all times. As a consequence of the conservation laws and $s, q \to 0$ as time $t \to \infty$, 
%see Figure~\ref{fig:PP-SQ} (left panel), 
system \eqref{eq:01} converges at $t\to\infty$ to the following equilibrium state:
\begin{equation}\label{equilibrium}
 s,q \xrightarrow{t\to\infty}0, \qquad  p\xrightarrow{t\to\infty} p_{\infty} := s_0 + q_0, \qquad  f \xrightarrow{t\to\infty} f_{\infty} := 2s_0 + q_0.
 \end{equation}
 
%\bigskip
\subsection{Outline}
In Section~\ref{sec:scaling}, we non-dimensionalise model system \eqref{eq:01} by introducing a suitable scaling. Moreover, we discuss the general behaviour and illustrate it by plotting a representative phase portrait. 

The results of this article are summarised as follows: 
In Section~\ref{sec:role}, we specify three perspectives of judging the role of transacylation and provide 
a comparative discussion in terms of numerical simulations.  
As expected from a nonlinear system like \eqref{eq:01} the role of transacylation is complicated. It can both speed up and slow down 
significantly (up to 80\% and even more) the production of FAs and MGs. 
The complexity of the behaviour raises the question, which conditions might allow to use a simplified system as approximation of the full model \eqref{eq:01}. We will also emphasise that the dynamics depends not only on the parameters, but \emph{also on the current state} of the hydrolytic machinery. It is possible, for instance, that DG transacylation initially slows down MG production only to altogether speed it up in the long-run of the evolution.

In Section~\ref{sec:QSSA}, we exploit that $q$ chances according to one gain and two loss terms, i.e. $\dot q = m_1 - m_2 -2\tau$.
We discuss in detail the involved time scales and provided Proposition~\ref{prop:QSSA}, which 
states conditions for a QSSA to be reasonable justified. More precisely, we identify three non-dimensional parameters, which -- when assumed sufficiently large -- allow $q$ to be  
approximated by a QSSA.

In Section~\ref{sec:asymptotics}, for each of these three parameters, we derive  the corresponding first order QSSA models.  
We illustrate these approximations with numerical simulations including some higher order correction terms.

In the final Section~\ref{sec:conclusion}, we provide a comprehensive discussion of the findings of the paper.
We shall point out that DG transacylation can serve a failsafe mechanism, which produces MGs and FAs in the case of 
a lacking DG hydrolysis. We will show that this benefit comes at the price of slowing down the 
hydrolytic machinery in cases when DG transacylation and DG hydrolysis compete for substrate. 
We will also point out an example case, where the numerical plot of $q(t)$ appears to the eye as already well approximated by its QSSA. Yet, the sensitivity derivative w.r.t. the DG transacylation rate constant $\kappa$,  which is a qualitative measure of the change due to variations in $\kappa$,  
is predicted wrongly by the QSSA reduced system. 
Hence, as future works, we suggest to include parameter sensitivity derivatives 
into the discussion whether a QSSA  is an admissible model reduction.

\section{Nondimensionalisation and qualitative behaviour}\label{sec:scaling}

We non-dimensionalise the enzymatic model system \eqref{eq:01} by rescaling
\[ t \to \frac{t}{T_{\mathrm{ref}}}, \quad s \to \frac{s}{S_{\mathrm{ref}}}, \quad q \to \frac{q}{Q_{\mathrm{ref}}}, \quad p \to \frac{p}{P_{\mathrm{ref}}} \quad \text{and} \quad f \to \frac{f}{F_{\mathrm{ref}}},
\]
where $T_{\mathrm{ref}}$, $S_{\mathrm{ref}}$, $Q_{\mathrm{ref}}$, $P_{\mathrm{ref}}$ and $F_{\mathrm{ref}}$ are the reference time and concentrations. Note that in a slight abuse of notation, we denote 
dimensional and rescaled non-dimensional variables with the same letters. 
Different reference time and concentrations can be chosen. 

A natural choice for $S_{\mathrm{ref}}$ in a model for TG breakdown is the initial TG concentration $s_0$, i.e. $S_{\mathrm{ref}} = s_0>0$. We choose also
$P_{\mathrm{ref}} = F_{\mathrm{ref}} = s_0$ as TG is ultimately processed into MG and FA. Since TG hydrolysis is the first hydrolytic step, which governs all the other activities, we normalise the time scale accordingly, i.e.  $T_{\mathrm{ref}} = s_0/V_1$. This implies that the $s$ equation 
rescales like
\begin{equation*}
\dot{s} = -\frac{V_1 s}{K_1+s} + \sigma q^2 \quad \xrightarrow{\mathrm{non-dimensionalisation}}
\quad \dot{s} = -\frac{s}{K+s} + \frac{\sigma Q_{\mathrm{ref}}^2}{V_1} q^2, \quad\text{where}\quad K := \frac{K_1}{s_0},
\end{equation*} 
where the right hand side stated the rescaled non-dimensional $s$ equation. 
The above choices of reference values lead to the following rescaled version of the $q$ equation:
\begin{equation}\label{eq:scaleq}
\dot q = \frac{s_0}{Q_{\mathrm{ref}}} \frac{s}{K+s} - \frac{s_0}{Q_{\mathrm{ref}}}\frac{V_2}{V_1} \frac{q}{K_2/Q_{\mathrm{ref}} + q} - 2 \frac{s_0}{Q_{\mathrm{ref}}} \frac{\sigma Q_{\mathrm{ref}}^2}{V_1} q^2.
\end{equation}

A preferable choice for $Q_{\mathrm{ref}}$ is less obvious. 
A first idea might be to also chose $Q_{\mathrm{ref}}=s_0$ like for the other concentrations. However, since DGs are processed by DG hydrolysis and DG transacylation alongside their production from DG hydrolysis, the actual DG concentration $q$ is typically much smaller than $s_0$. 
In view of this inflow-outflow dynamics of $q$, another possible choice of an intrinsic reference value $Q_{\mathrm{ref}}$ could be a value for which inflow and outflow balance, i.e. $\dot{q}=0$ holds for a certain amount of substrate $s$, let's say initially when $s=s_0$. However, choosing $Q_{\mathrm{ref}}$ in this way would not apply to all the parameters we would like to analyse. In particular in the absence of transacylation, 
a value $Q_{\mathrm{ref}}$ such that $\dot{q}=0$ holds only exists when $V_2>V_1$, see Section~\ref{sec:QSSA}. 
Even in the cases when such reference value exists, it depends on $\sigma$, 
which makes a comparison between cases with and without transacylation, 
i.e. $\sigma=0$ cumbersome. 

We therefore prefer to set $Q_{\mathrm{ref}}= K_2$, which is independent of 
these cases and allows to normalise the Michaelis constant of the DG hydrolysis rate, thus reducing one parameter. Moreover, we introduce 
the dimensionless parameter $L = s_0/K_2$, which appears in all three terms in \eqref{eq:scaleq}.
Moreover, the dimensionless parameter 
$V = V_2/V_1$ denotes the ratio of the two MM maximal velocities. 
Finally, the nondimensional transacylation term in \eqref{eq:scaleq} features the dimensionless parameter 
${\sigma K_2^2}/{V_1}$, which compares 
DG transacylation to TG hydrolysis.
We prefer to rewrite this parameter as 
$$
\frac{\sigma K_2^2}{V_1}=\frac{\sigma K_2^2}{V_2} V =: \kappa V,
$$
where we introduce the dimensionless parameter  
$\kappa = \sigma K_2^2/V_2$. The parameter $\kappa$ compares DG transacylation to DG hydrolysis, thus the two downstream pathways for DG. 
Altogether, the rescaled model \eqref{eq:01} becomes
\begin{equation}\label{eq:02a}
\begin{cases}
\begin{aligned}
     	&\dot{s} = -m_1(s) + V \kappa q^2,\\[1mm]
    	&\dot{q} = L \left[ m_1(s) - V \left(m_2(q) + 2 \kappa q^2\right) \right], \\[1mm]
    	&\dot{p} = V \left( m_2(q) + \kappa q^2\right),\\[1mm]
	&\dot{f}  = m_1(s)+ V m_2(q), 
\end{aligned}
%\begin{aligned}
%     	&\dot{s} = -a m_1(s) + V \kappa a q^2,\\[1mm]
%    	&\dot{q} = L \left[ a m_1(s) - V \left(a m_2(q) + 2 \kappa a q^2\right) \right], \\[1mm]
%    	&\dot{p} = V \left( a m_2(q) + \kappa a q^2\right),\\[1mm]
%	&\dot{f}  = a m_1(s)+ V a m_2(q), \\[1mm]
%	&\dot{a}  = -\lambda a,
%\end{aligned}
\end{cases}
\quad\text{where}\quad
\begin{cases}
m_1(s) = \frac{s}{K + s},\\[1mm]
m_2(q) = \frac{q}{1 + q}, \\[1mm]
K = \frac{K_1}{s_0}, L = \frac{s_0}{K_2}, \\[1mm]
V = \frac{V_2}{V_1}, \kappa = \frac{\sigma K_2^2}{V_2}. 
\end{cases}
\end{equation}
The model features three positive dimensionless parameters $K,L,V>0$ and the non-negative parameter $\kappa\ge 0$ whether transacylation is considered or not. 
Given the molecular biological background, one would expect that the initial TG concentration $s_0$ is at least not smaller than the Michaelis constants $K_2$ of DG hydrolysis, which suggests $L\ge1$
although we shall not assume this condition. 
%, i.e. we assume \textcolor{red}{not needed?}
%\begin{equation}\label{ass:KL}\tag{Hyp:KL}
%\textcolor{red}{K \le 1, \qquad L \ge 1}.
%\end{equation}
%Otherwise, ATGL and HSL would appear to have a very bad affinity towards that the substrates they have evolved to primarily process. 
There are also some data (see \cite{Kim-2008}) suggesting that 
$V_2$ should be somewhat larger than $V_1$, so that one can maybe expect $V\sim3$, but again we shall not assume this.
%However, as along as this is not confirmed within an ongoing collaboration with our experimental partners, we consider
%\begin{equation}\label{ass:Vkappa}\tag{Hyp:Vkappa}
%V > 0, \qquad \kappa \ge 0.
%\end{equation}

The scaled initial concentrations become
\begin{equation}\label{eq:02b} s(0) = 1, \quad  q(0) = \frac{q_0}{K_2}, \quad p(0) = 0 \quad \text{and} \quad f(0) = 0
\end{equation}
and the non-dimensional form of the conservation laws 
\eqref{eq:masscons} reads as
\begin{equation}\label{eq:02c} 3s(t) + \frac{2q(t)}{L} + p(t) + f(t) = 3  + \frac{2q(0)}{L} \quad \text{and} \quad s(t) + \frac{q(t)}{L} + p(t) = 1 + \frac{q(0)}{L} \end{equation}
which holds true for all times $t \ge 0$. Moreover, $p_{\infty} = 1 + q_0/s_0 $ and $r_{\infty} = 2+q_0/s_0$.

\subsubsection*{Phaseportrait and global a priori bounds}
The qualitative behaviour of the $s-q$ system is illustrated by phaseportrait Fig.~\ref{fig:PP-SQ}: Given initial data $s_0>0$
and assuming $q_0$ sufficiently small in comparison, the TG concentration $s$ decays due to the 
predominant TG hydrolysis. In the same process, the DG concentration $q$ builds up to a maximum value, 
before decaying alongside the breakdown of TG. We will discuss later the question 
under which conditions this coupled decay process can be approximated by a QSSA replacing the
$q$ equation. In a second regime for large initial values $q_0$ (compared to $s_0$), transacylation 
leads to a transient increase of $s$ before again $s$ and $q$ tend to zero in the large time behaviour
due to the breakdown of TG and DG.

\begin{figure}[tb]  
\centering
	%\hspace{-0.5cm} 
	\includegraphics[width=0.55\textwidth]{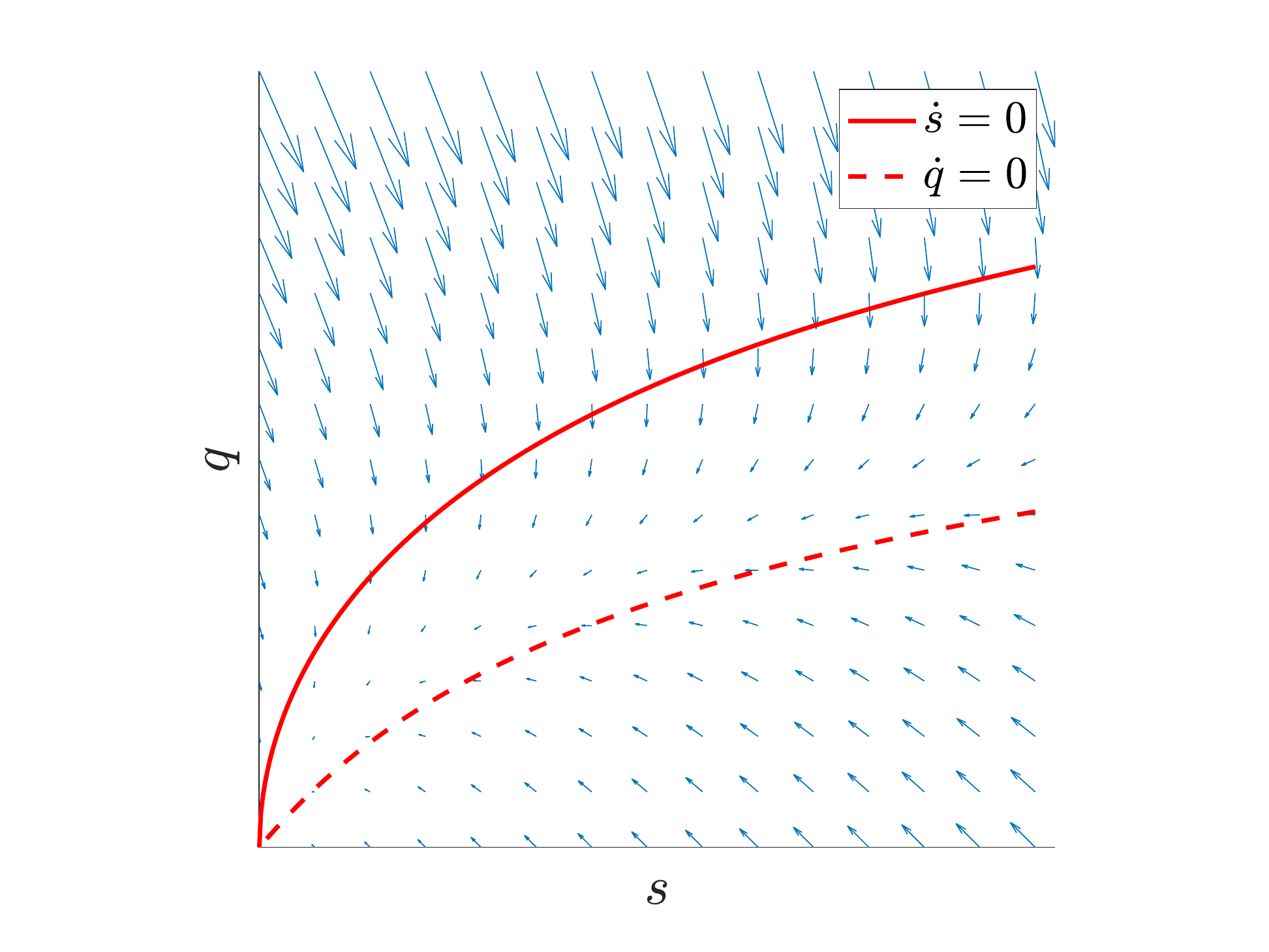} 
	%\hspace{-1.5cm}
	%\includegraphics[width=0.55\textwidth]{img/phase_portrait_v1v2}						
 	\caption[]{$s$--$q$ phase portrait; solid curve is the $s$-nullcline, dashed curve is the $q$-nullcline. % Nonscaled system. Parameters: V1 = 1.4; K1 = 1.5; V2 = 0.4; K2 = 0.5; sigma = 1;
%	(right panel) $v_1$--$v_2$ phase portrait; solid curve is the $v_1$-nullcline, dashed curve is the $v_2$-nullcline. There exist other two nullclines, namely, $v_1 = 1$ and $v_2 = 1$.% Nondimensionalised system. Parameters: V = K = l = kappa = 1
	}
  	\label{fig:PP-SQ}
 \end{figure}

The large time behaviour of the $s-q$ system is addressed by the following lemma:

\begin{lemma}[Global bounds and exponential decay of TG/DG concentrations]\label{lem:globalbounds}\hfill\\
Consider solutions to \eqref{eq:02a} subject to initial data \eqref{eq:02b}.

Then, there exist positive constants $s_{\mathrm{max}}$, $q_{\mathrm{max}}$ and $C_1$, $C_2$ such that
\begin{equation}\label{eq:exp} 
\text{for each }t \ge 0:
\quad
\begin{cases}
0 \le s(t) \le s_{\mathrm{max}}, \quad 0 \le q(t) \le q_{\mathrm{max}},\\
0 \le s(t), q(t) \le C_1 e^{-C_2 t} 
\end{cases}
\end{equation}
If additional $q(0)$ is smaller than the value where $\dot{q}(s_{\max})=0$ holds, then 
the constant $q_{\mathrm{max}}$ can be chosen independently of $L$.
\end{lemma}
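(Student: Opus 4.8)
The plan is to argue in three stages: nonnegativity together with global existence, a priori upper bounds, and finally exponential decay via a tailored Lyapunov functional. For the first stage I would establish forward invariance of the nonnegative quadrant by inspecting the vector field on the boundary: on $\{s=0\}$ one has $\dot s = V\kappa q^2\ge 0$, and on $\{q=0\}$ one has $\dot q = L\,m_1(s)\ge 0$, so the flow points inward and $s(t),q(t)\ge 0$ as long as the solution exists (the same sign argument gives $p,f\ge 0$). Local existence follows from smoothness of $m_1,m_2$ on the relevant domain, and the bounds derived below exclude finite-time blow-up, hence give global existence.

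For the upper bounds I would exploit the conservation laws \eqref{eq:02c}. The second one, together with $q,p\ge 0$, immediately yields $s(t)\le 1+q(0)/L = 1+q_0/s_0=:s_{\max}$, which is already independent of $L$. The same law gives the crude estimate $q(t)\le L+q(0)$, providing a (generally $L$-dependent) constant $q_{\max}$ for the first assertion. For the refined, $L$-independent bound I would instead use an invariant-region argument: let $\bar q$ be the value at which $\dot q=0$ when $s=s_{\max}$, i.e. the solution of $V\big(m_2(\bar q)+2\kappa\bar q^2\big)=m_1(s_{\max})$, which is exactly the quantity named in the hypothesis. Since $L$ multiplies the entire bracket defining $\dot q$, the location $\bar q$ does not depend on $L$. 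On the line $\{q=\bar q\}$ with any $s\le s_{\max}$ one has $\dot q = L\big[m_1(s)-m_1(s_{\max})\big]\le 0$ because $m_1$ is increasing; hence $\{q\le\bar q\}$ is forward invariant whenever $q(0)\le\bar q$, so one may take $q_{\max}=\bar q$, independent of $L$.

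The heart of the proof is exponential decay, for which I would use the weighted quantity $\Phi:=2Ls+q$. The key structural observation is that the transacylation contributions cancel exactly: since $\dot s$ gains $V\kappa q^2$ while $\dot q$ loses $2V\kappa q^2$, a direct computation gives $\dot\Phi = 2L\dot s+\dot q = -L\big(m_1(s)+V m_2(q)\big)$, with no residual $q^2$ term. Using the bounds $s\le s_{\max}$, $q\le q_{\max}$ to linearly minorise the Michaelis--Menten terms, $m_1(s)\ge s/(K+s_{\max})$ and $m_2(q)\ge q/(1+q_{\max})$, I would obtain $\dot\Phi\le -L\big(\mu_1 s+V\mu_2 q\big)$ with $\mu_1=1/(K+s_{\max})$ and $\mu_2=1/(1+q_{\max})$, and then match this against $\Phi=2Ls+q$ to conclude $\dot\Phi\le -C_2\,\Phi$ with $C_2=\min\{\mu_1/2,\,LV\mu_2\}$. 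Integrating gives $\Phi(t)\le\Phi(0)e^{-C_2 t}$, and since $0\le s\le\Phi/(2L)$ and $0\le q\le\Phi$, the estimate \eqref{eq:exp} follows with $C_1$ a suitable multiple of $\Phi(0)=2L+q_0/K_2$.

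The main obstacle, and the reason a naive estimate fails, is the positive feedback of transacylation into the substrate equation, the $+V\kappa q^2$ term in $\dot s$: a crude linear comparison or an attempt to show monotone decay of $s$ breaks down once $q_{\max}$ is large, since this source can dominate the hydrolytic loss $-m_1(s)$. Identifying the precise weight $2L$ that makes the transacylation terms cancel in $\Phi=2Ls+q$ is therefore the crux; once this cancellation is in place only the genuinely dissipative hydrolysis terms survive, and the decay estimate closes by routine minorisation.
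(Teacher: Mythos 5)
Your proposal is correct and follows essentially the same route as the paper: upper bounds from the conservation laws, the $L$-independent bound on $q$ via the upper solution $\tilde q(s_{\max})$ (your $\bar q$), and exponential decay of a weighted sum $\alpha L s+\beta q$; your choice $\Phi=2Ls+q$ is exactly the paper's boundary case $\alpha=2\beta$, for which the transacylation terms cancel rather than merely contribute a nonpositive term $-(2\beta-\alpha)LV\kappa q^2$ as in the paper's slightly more general range $0<\beta<\alpha\le 2\beta$. The resulting constant $C_2=\min\{\tfrac{1}{2(K+s_{\max})},\tfrac{LV}{1+q_{\max}}\}$ coincides with the paper's.
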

\begin{proof}
The existence of constants $s_{\max}$ and $q_{\max}$ follows directly from the mass conservation laws \eqref{eq:02c} and the non-negativity of $s$ and $q$, which imply the bounds $s(t)\le 1 + \tfrac{2q(0)}{3L} =:s_{\max}$ and $q(t) \le L + q(0)$.
While $s_{\max}$ is uniformly bounded in $L\ge1$  this is not true for this bound on $q$. However, the bound 
$s_{\max}$ allows to estimate 
\begin{equation*}
 \dot{q} = L \left[ m_1(s) - V \left(m_2(q) + 2 \kappa q^2\right) \right] \le  L \left[ m_1(s_{\max}) - V \left(m_2(q) + 2 \kappa q^2\right) \right].
\end{equation*}
Denote by $\tilde{q}(s)$ the value of $q$, where $\dot{q}=0$ holds (see Fig. \ref{fig:PP-SQ}). 
Then, $\tilde{q}(s_{\max})$ constitutes an upper solution to $q$ provided that 
$q(0)\le \tilde{q}(s_{\max})$. Hence, under this additional 
assumption, we deduce $q(t)\le  \tilde{q}(s_{\max})=:q_{\max}$, where $q_{\max}$ is now independent of $L$ (and also uniform in large $V$ and $\kappa$, cf. Lemma \ref{lem:tildeq} below).

Next, we estimate 
\[ m_1(s) = \frac{s}{K+s} \ge \frac{s}{K+s_{\max}} \quad \text{and} \quad m_2(q) = \frac{q}{1+q} \ge \frac{q}{1+q_{\max}}. \]
Let $\alpha$ and $\beta$ be arbitrary numbers such that $0 < \beta < \alpha \le 2 \beta$. For instance, we can choose 
$\alpha=1=2\beta$.
Then,
\[ \begin{aligned} (\alpha L s + \beta q)^{\displaystyle \cdot} & = -(\alpha - \beta) L m_1(s) - \beta L V m_2(q) - (2 \beta - \alpha) L V \kappa q^2 \\
%& \le -(\alpha - \beta) l \frac{s}{K+s_{\max}} - \beta l V \frac{q}{1+q_{\max}} \\
& \le  - \frac{\alpha - \beta}{\alpha(K+s_{\max})} \alpha L s -  \frac{L V}{1+q_{\max}} \beta q \\
& \le - \min \left\{ \frac{\alpha - \beta}{\alpha(K+s_{\max})}, \frac{L V}{1+q_{\max}}\right\} \left( \alpha L s + \beta q\right) 
 =: -C_2 \left( \alpha L s + \beta q\right),
\end{aligned} \]
where we set $C_2 = \min%\limits_{0 < \beta < \alpha \le 2 \beta} 
\left\{ \frac{\alpha - \beta}{\alpha(K+s_{\max})}, \frac{L V}{1+q_{\max}}\right\}$. It follows from the Gronwall inequality that
\[ \alpha L s(t) + \beta q(t) \le (\alpha L + \beta q(0)) e^{-C_2 t}. \]
We set $C_1 = \max \left\{ 1 + \frac{\beta}{\alpha L}q(0), \frac{\alpha L}{\beta} + q(0)\right\} =  \max \left\{ 1, \frac{\alpha L}{\beta} \right\}\left( 1 + \frac{\beta}{\alpha} \frac{q_0}{s_0} \right)$ and complete the proof.
\end{proof}

%One could expect that the transacylation process $v_3 > 0$ would speed up the production of $P$ and $R$; however, as we will see (c.f, Figure~) this is not always the case. In fact, $P$ production can be slowed down by as much as 40-50\% in some parameter regimes, as well as transacylation can speed up the production of $P$ by up to 80\% in another parameter regime.

\section{The qualitative role of DG transacylation}\label{sec:role}

ATGL is the main TG hydrolase and thus believed to be the key regulator for 
the lipolytic cascade TG $\to$ DG $\to$ MG.   
This poses  the question if the ability of AGTL to additionally facilitate DG transacylation 
is of physiological relevance or not?

The question of the qualitative role of DG transacylation to our model systems can be viewed from three different perspectives: 
\begin{itemize}
\item[i)] How does DG transacylation effect the processing of the substrate TG?
\item[ii)] How does DG transacylation effect the production of the product MG?
\item[iii)] How does DG transacylation effect the production of the product FA?
\end{itemize}

\begin{figure}[!htb]
\centering
	\includegraphics[width=1\textwidth]{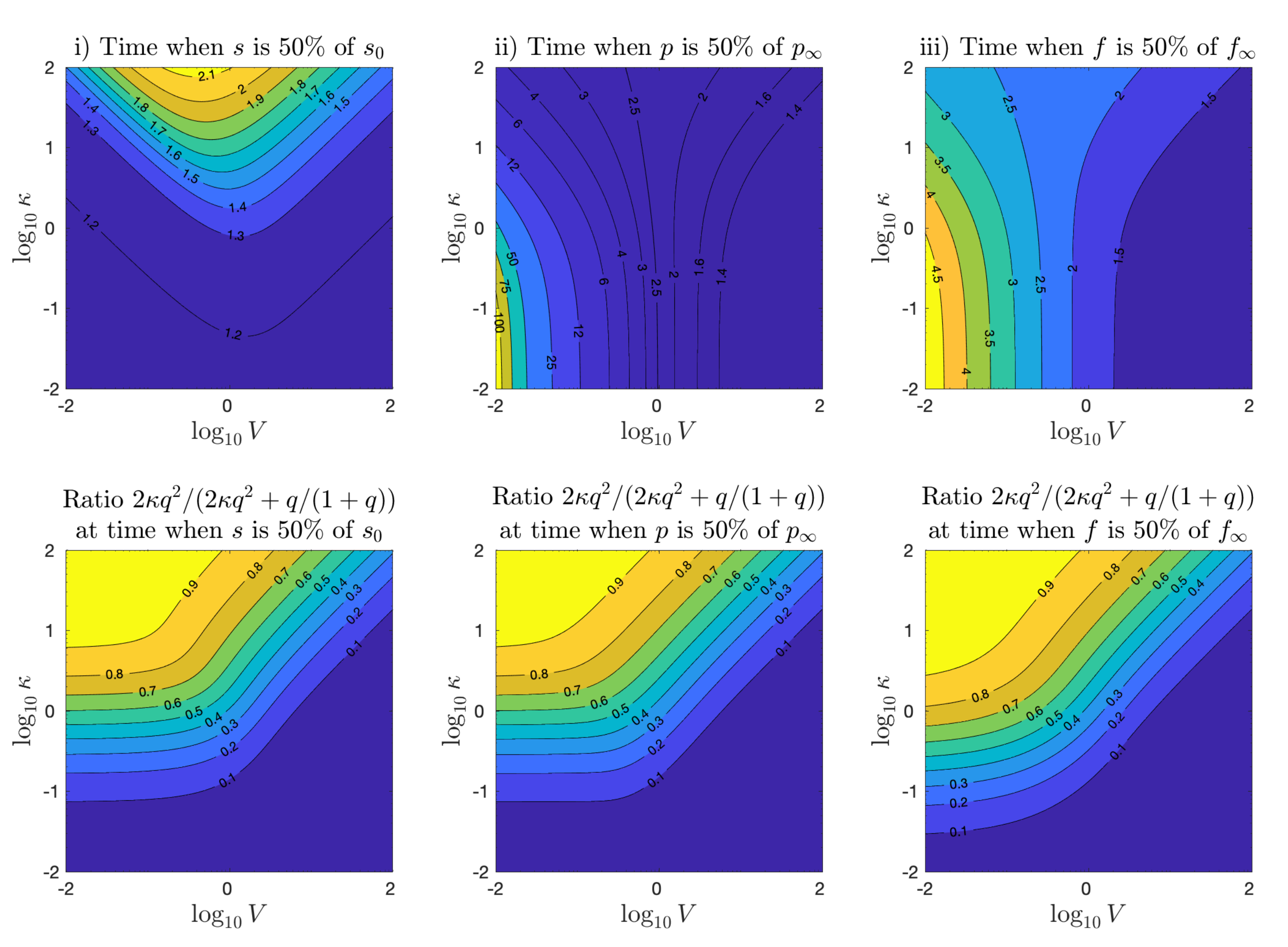}
	\caption[]{\textit{Top row:} Absolute simulation time of system \eqref{eq:02a} needed i) to process 50\% of the the initial substrate $s_0$, ii) to produce 50\% of the final products $p_{\infty}$ and iii) to produce 50\% of $r_{\infty}$. \textit{Bottom row:} Relative contribution of DG transacylation $2\kappa q^2$ in the processing of DGs at the times as the above plots. The other parameters are $L= K = 1$.}
	\label{fig:50proa}
\end{figure}

In the following, we present numerical simulations of the non-dimensionalised system \eqref{eq:02a} with MATLAB version 2021b and its built-in ode23s routines. 
In all plots, we consider initial data consisting only of TG, i.e. $s_0=1$ and $q_0=p_0=f_0=0$ and fix the parameters $L=K=1$.  

Fig.~\ref{fig:50proa} plots in the top row the simulation times, which are needed -- depending on values of the parameters  $V$ and $\kappa$ -- i) to process 50\% of the initial substrate TG $s_0$, ii) to produce 50\% 
of the final amount of MG, $p_{\infty}=s_0=1$, and iii) to produce 50\% of the final amount of FA, $f_{\infty}=2 s_0=2$. The axes plot the decimal 
$\log$ of the parameter values $(V,\kappa)$. Note that the times ii) and iii) to produce 50\% of 
$p_{\infty}$ and $f_{\infty}$ will always be larger than time i) to process 
50\% of the initial substrate, which is clearly a requirement for both other times to be reached. The time to produce 50\% of  $p_{\infty}$ can be much larger than the one of 50\% of 
$f_{\infty}$, since TG hydrolysis can reach the FA target on its own, while 
in order to reach 50\% of  $p_{\infty}$, either DG transacylation or DG hydrolysis is required.

The interpretation of Fig.~\ref{fig:50proa} is far from trivial. For a better understanding we provide the second row in 
Fig.~\ref{fig:50proa}, which plots the relative fraction of the DG transacylation rate to the total rate of DG processing, i.e. $2\kappa q^2 / (2\kappa q^2 + \frac{q}{1+q})$ in the same three cases, at the same times and under the same parameters as the top row.
Note that thanks to our choice of non-dimensional parameters in Section~\ref{sec:scaling}, the ratio $2\kappa q^2 / (2\kappa q^2 + \frac{q}{1+q})$ only involves the parameter $\kappa$ yet also depends via the solution $q$ on $V$. 
All the images in the second row of
Fig.~\ref{fig:50proa} show that DG transacylation is dominant at a level of 90\% on the left upper corner. On the other hand, DG hydrolysis is dominant
at a level of 90\% at the bottom and the extended right lower corner. 
In the in between area, the contour levels decay horizontally for small levels of $V$ and diagonally for larger values of $V$. The reason for this kink in the contour levels is the change in the values of $q$ in calculating the reaction rates $2\kappa q^2$ and $\frac{q}{1+q}$, respectively. 

Returning to the interpretation of the top row of Fig.~\ref{fig:50proa}, we see in i) that increasing the value of $\kappa$ in DG transacylation slows down the processing of the TG substrate. The slowing down effect is the strongest for values of $V\sim 1$.

For larger value of $V$, i.e. in the right upper part, the contour lines of equal times follow the contour lines of equal DG fraction in the picture below. Accordingly, the increased TG processing times corresponds in the right upper part to the increase of the dominance of   
DG transacylation, which provides increasing feedback into the TG pool.  

For small values of $V$, however, the contour lines in Fig.~\ref{fig:50proa} i) do not correspond to the ones in the image below, which means 
that structure of the DG transacylation feedback changes: In this upper left corner, where DG transacylation is overall dominant and roughly constant, the contour lines in Fig.~\ref{fig:50proa} i) follow the lines where the parameter product 
$V\kappa$ is constant, i.e. $\log_{10} \kappa = C - \log_{10} V$ for some constant $C$.
In this part of the image, the increase of the product $V\kappa$ determines the increase in feedback to the TG pool and the increase in the delay of the 50\% TG processing time. Hence, we infer that in this part of the picture, changes in $V$ and $\kappa$ do not substantially change the values of $q$. Rather, the increase of the source term $V\kappa q^2$ in the $s$ equation is mainly due to the change in the parameter product $V \kappa$. This is different to the right upper area where the contour lines of Fig.~\ref{fig:50proa} i) follow the contour lines of the image below!

%\textcolor{red}{make Fig.~\ref{fig:50proa} left column one plot, with 
%Fig.~\ref{fig:50proa} left column underneath. Then Fig.~\ref{fig:50proa} right column another plot.}

Figs.~\ref{fig:50proa} ii) shows for small values of $\kappa$ vertical contour levels  as the time needed to produce 50\% of MG depends under this conditions entirely on DG hydrolysis and the parameter $V$. Such vertical contour levels for small values of $\kappa$ are also shown in Figs.~\ref{fig:50proa} iii) since DG hydrolysis also contributes to the production of FAs, yet with a much more moderate 
variation of the values compared to Figs.~\ref{fig:50proa} ii) since TG hydrolysis is a constant source of FA production. 
In both images Figs.~\ref{fig:50proa} ii) and iii), the influence of DG transacylation 
tilts the contour lines away from the middle for increasing values of $\kappa$. In the left upper half of Fig.~\ref{fig:50proa} ii), the contour lines follow again the lines 
$\log_{10} \kappa = C - \log_{10} V$ for some constant $C$, which shows 
how DG transacylation helps in the case of lacking DG hydrolysis to produce MGs and thus lower the time of 50\% $p_{\infty}$ production. The left upper half of Fig.~\ref{fig:50proa} iii) shows similar contour lines, since DG transacylation provides 
feedback into the TG pool, from which TG hydrolysis can release additional FAs. However, this DG feedback effects are somewhat more blurred compared Figs.~\ref{fig:50proa} ii) due to the ongoing TG hydrolysis, which occurs anyway.

In the right upper 
half of Fig.~\ref{fig:50proa} ii), the contour lines turn to follow the contour lines in the image below and the production of MGs  is actually slowed down by increasing $\kappa$ and thus DG transacylation. At the first glance, this seems counter intuitive since DG transacylation provides an additional pathway to produce MGs. 
However, we recall that DG hydrolysis yields one MG out of one DG while DG transacylation results in only 1/2 MG. Hence, increased DG transacylation can hinder the twice more efficient production of MGs via DG hydrolysis, which causes the production of MGs to effectively slow down. The left upper half of Fig.~\ref{fig:50proa} iii) shows again similar contour lines and a similar behaviour as DG transacylation hinders the production of FAs from DG hydrolysis. 

\subsubsection*{Relative time changes compared to dynamics without DG transacylation}

\begin{figure}[!htb]
\centering
	\includegraphics[width=1\textwidth]{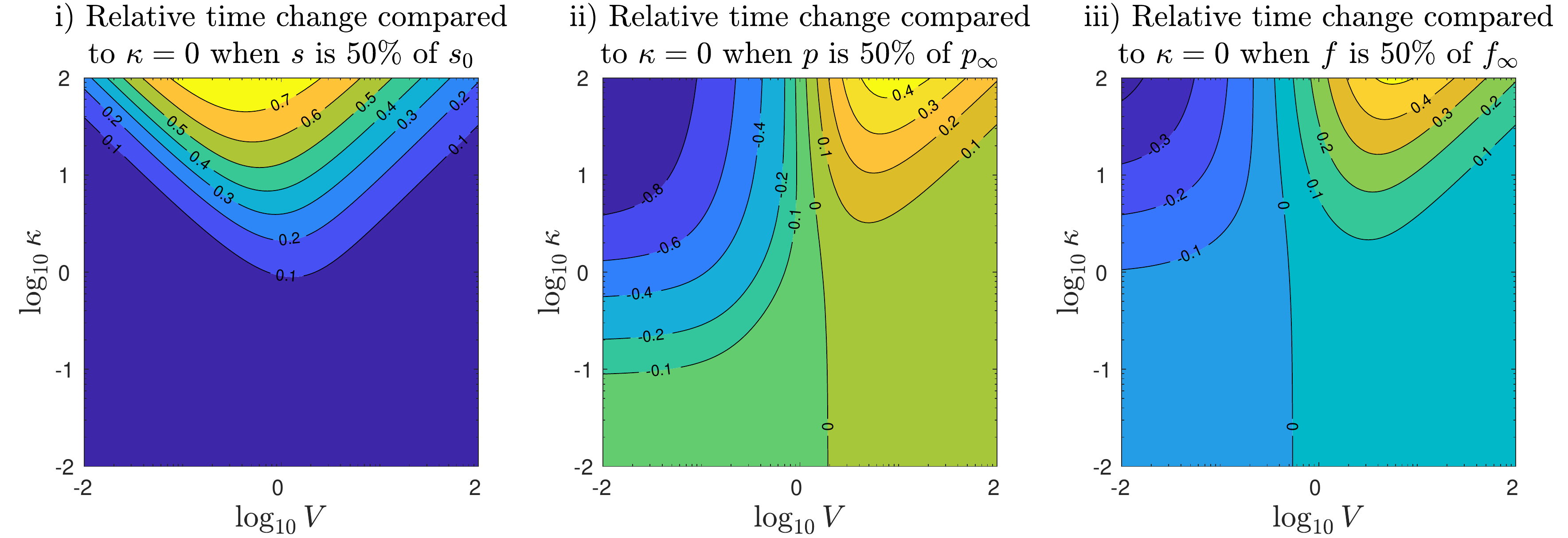}
	\caption[]{Relative time change $(t_{\kappa > 0} - t_{\kappa=0})/t_{\kappa=0}$ of the simulation time of system \eqref{eq:02a} compared the same dynamics without DG transacylation, i.e. $\kappa=0$. The left image shows the relative time change for different values of the parameter $V$ and $\kappa$ at the time when 50\% of the the initial substrate $s_0$ is processed. The middle image shows that the relative time change in order to produce 50\% of the final product $p_{\infty}$
can be either negative or positive, representing cases where DG transacylation compensates for lacking DG hydrolysis 
or slows down MG production by substrate competition with DG hydrolysis. The right picture shows that also the relative time change in producing 50\% of the final FAs $f_{\infty}$ can be negative or positive, representing cases when the feedback of DG transacylation into the TG pool allows TG hydrolysis to release additional FAs and when DG transacylation shows substrate competition 
with DG hydrolysis and thus slows down the release of FAs. The other parameters are $L= K = 1$.}
	\label{fig:50prob}
\end{figure}

Fig.~\ref{fig:50prob} shows relative time changes compared to the dynamics without transacylation, i.e. 
\begin{equation*}
\frac{t_{\kappa > 0} - t_{\kappa=0}}{t_{\kappa=0}}, 
\end{equation*}
where $t_{\kappa > 0}$ denotes the simulation times like in the cases i), ii), and iii) of Fig.~\ref{fig:50proa} for positive values of $\kappa$ and $t_{\kappa}=0$ denotes the 
corresponding simulation time without DG transacylation (yet all other parameters being the same). 

On the left, Fig.~\ref{fig:50prob} i) plots the relative time change 
with respect to processing 50\% of the initial substrate TG $s_0$. 
For small values of $\kappa$, the image show small, positive relative time changes, which corresponds to the small, positive feedback of DG transacylation into the pool of TG delaying the time to 
have processed 50\%. 
With increasing $\kappa$, the delay effect becomes more and more significant reaching up to 70\% for values of $V\lesssim1$. 
The contour lines of Fig.~\ref{fig:50prob} i) correspond to Fig.~\ref{fig:50proa} i), which is an easy consequence of the fact that for $\kappa=0$, the processing time of 50\% TG 
is due to TG hydrolysis alone and thus the same for all values of $V$. 

The middle image of Fig.~\ref{fig:50prob} ii) shows the complexity of the interplay DG transacylation vs. DG hydrolysis. 
For small values of $V$, increasing $\kappa$ leads 
to DG transacylation becoming dominant and therefore speeding 
up MG production in Fig.~\ref{fig:50prob} ii) up to 80\% and higher.
Here, DG transacylation acts as a by-pass process in order to  
produce MGs despite lacking DG hydrolysis. 
A similar speed is shown in Fig.~\ref{fig:50prob} iii), where 
DG transacylation feeds back into the TG pool, from which 
TG hydrolysis can release additional FAs. 
In contrast, for larger values of $V\gtrsim1$, Fig.~\ref{fig:50prob} ii) shows 
that increasing $\kappa$ and DG transacylation results in a substrate competition to 
DG hydrolysis, which is nevertheless the more efficient process since it produces 1 MG out of 1 DG, while DG transacylation yields on 1/2 MG. Therefore, the 50\% MG production times are slowed down by 40\% and more. 
This is again similar in Fig.~\ref{fig:50prob} iii), where DG transacylation 
hinders the immediate production of a FA by DG hydrolysis (even if TG hydrolysis can later release 
1/2 FA form the 1/2 TG produced by DG transacylation). 

\subsubsection*{Effects of DG transacylation depend on the state of the hydrolytic machinery}

We point out the effects of DG transacylation also depend on the current stage within the time evolution of the hydrolytic machinery. Fig.~\ref{fig:50proc} plots the relative time changes comparing system \eqref{eq:02a} with $L=K=1$ and $\kappa=10$ to the same system with $\kappa=0$ as function of $V$ and for different percentages $x\%$ of TG process and the corresponding percentages $(100-x)\%$ of MG and FA production. 

Fig.~\ref{fig:50proc} i) plots the slowdown in the TG substrate processing times. For $V\lesssim1$, i.e. under conditions where DG hydrolysis is lacking, the slowdown of TG process is largest for small values of $x$, i.e. at the beginning of hydrolysis, when DG concentrations are largest and 
DG transacylation provides therefore the largest feedback into the TG pool.  
For $V\gtrsim1$, i.e. under conditions where DG hydrolysis is present, the
effects of DG transacylation and thus the slowdown get smaller and 
almost independent of $x$ for large values of $V$. 

Fig.~\ref{fig:50proc} ii) shows for $V\ll1$ a strong 80\% acceleration of MG production thanks to the MGs produced by DG transacylation, which serves under this conditions as a by-pass of the lacking  
DG hydrolysis. In contrast, for values $V\gtrsim 1$, Fig.~\ref{fig:50proc} ii) displays that MG production is slowed down 
for smallish $x=10,25,\ldots$, i.e. at the beginning of hydrolysis. Yet, later in the evolution, when $x=75,90$, the early slow down is not only compensated but MG production is altogether accelerated due to DG transacylation. This example underlines the nonlinear behaviour of the 
effects of DG transacylation and how they depend on the current state of the hydrolytic machinery. This is important in view of biochemical experiments, in particular when comparing in vitro cell assays experiments to in vivo experiments, where effective DG concentrations are very difficult to determine. 

Fig.~\ref{fig:50proc} iii) shows for values $V\ll1$ and smallish $x=10,25$ a very strong speed up of 80\%
in the FA production time since DG transacylation creates additional TGs, which serve as a substrate for TG hydrolysis and therefore produce  additional FAs. However, later in the evolution for $x>50\%$, there is no noticeable  
speed up anymore since only 50\% of the final FAs come from TG hydrolysis (and in-between DG transacylation).
The remaining x-50\% of the final FAs can only be produced via DG hydrolysis, which is just as slow as it is.  

On the other hand, Fig.~\ref{fig:50proc} iii) shows for values $V\gtrsim1$ a slow down effect on FA production from DG transacylation, as it competes 
with DG hydrolysis for substrate and (after a TG hydrolysis step) yields only half of the FA release compared to DG hydrolysis.

\begin{figure}[tb]
\centering
	\includegraphics[width=1\textwidth]{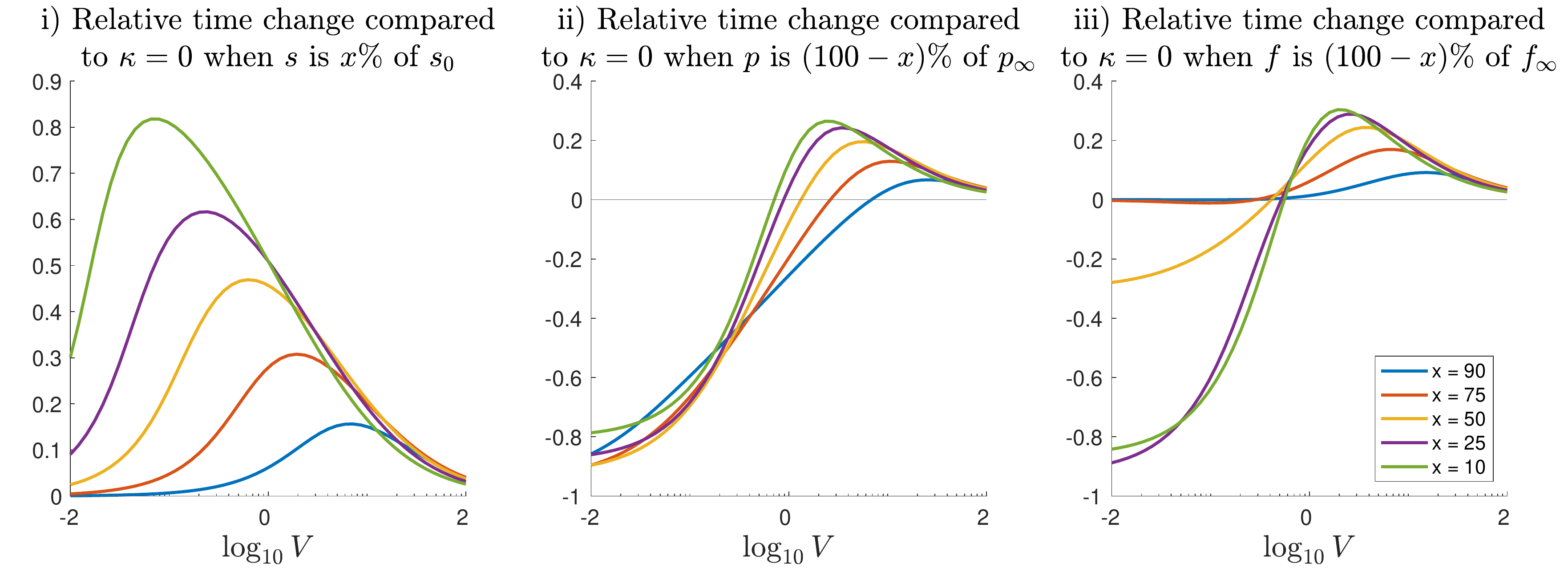}
	\caption[]{Relative time change $(t_{\kappa > 0} - t_{\kappa=0})/t_{\kappa=0}$ of the simulation time of system \eqref{eq:02a} with $L=K=1$ and $\kappa=10$ compared the same dynamics without DG transacylation $\kappa=0$. The left image i) plots the relative time changes for different percentages $x$ of TG processing as functions of the parameter $V$.
	The middle and the right images ii) and iii) plot the relative time changes for the corresponding percentages $(100-x)$ of MG and FA production as functions of the parameter $V$.	}
	\label{fig:50proc}
\end{figure}

\section{Model reduction and QSSA prerequisites} \label{sec:QSSA}
The most important equations in system \eqref{eq:02a} 
is the closed subsystems for TG and DG concentrations:
\begin{align}
     	\dot{s} &= -m_1(s) + V \kappa q^2, && s(0)=1, \label{eq:qs} \\[1mm]
    	\dot{q} &= L \bigl[ m_1(s) - V d(q)  \bigr],  &&d(q) := m_2(q) + 2 \kappa q^2.
	\label{eq:qq}
\end{align}
Here, we have scaled (w.l.o.g.) the initial TG concentration to one and introduce the function $d(q)$ as a short hand for the sum of the DG hydrolysis and DG transacylation reaction rates in eq. \eqref{eq:qq}. 
\smallskip

In this section, we investigate under which conditions system 
\eqref{eq:qs}--\eqref{eq:qq} can be reduced to a simpler model system by means of a quasi-steady-state-approximation (QSSA). QSSAs exploit that some quantities change slowly in time compared to typical time scales of the overall dynamics. Then, a zero order QSSA sets the time derivative of these quantities to zero, thus treating it quasi like for a steady state. From the viewpoint of asymptotic analysis, QSSAs are singularly perturbed ODE problems, see e.g.  \cite{Holmes-1995,kuehn2015} and the many references therein. 
Hence, approximating slowly changing quantities leads to initial layers (except in the special case where the initial data of the slowly changing quantities already matches their QSSA values), which can be resolved, for instance, by matched asymptotics. 
More accurate approximations can be obtained by calculating higher order correction terms. Note that \eqref{eq:qs} can't qualify for a QSSA since the positive feedback provided by DG transacylation is at most half of TG hydrolysis. Even if DG hydrolysis could be neglected, then the feedback DG transacylation and subsequent TG hydrolysis provides is only like TG $\to$ DG $\to $1/2 TG $\to$ 1/2 DG \dots, see Figure~\ref{fig:reactionnetwork}.
\smallskip

%\begin{equation}\label{eq:qq}
%    	\dot{q} = L \Bigl(m_1(s) - V \underbrace{\left( m_2(q) + 2 \kappa q^2\right)}_{=:d(\tilde q)}\,  \Bigr), 
%\end{equation}
However, eq. \eqref{eq:qq} features a gain and two loss terms (TG hydrolysis vs. DG hydrolysis and DG transacylation), where the loss terms 
are monotone increasing in $q$. Hence, the dynamics of \eqref{eq:qq} tends to balance gain and loss terms. If this happens sufficiently fast, a QSSA is expected to serve as a simplified approximative model replacing 
\eqref{eq:qq} by an algebraic equation (and possible higher order correction terms).

As a first example, we divide eq. \eqref{eq:qq} by $L$ and assume that $L$ is a large parameter. Then, the  small parameter $\eps={L}^{-1}$  
suggests a QSSA $\eps \dot{q} \simeq 0$ whenever $L$ is sufficiently large. In the following, we will see that also the parameters 
$V$ and $\kappa$ yield QSSAs provided they are sufficiently large. In these two cases, one rescales $q \to \eps q$ in 
\eqref{eq:qq} with either $\eps = V^{-1}$ or $\eps = \kappa^{-1/2}$, which 
lead again to a left hand side $\eps \dot q\simeq 0$.

In all three cases, system \eqref{eq:qs}--\eqref{eq:qq} can be  approximated by a zero order QSSA model system by setting $\eps \dot q= 0$ and replacing the differential equation \eqref{eq:qq} by the algebraic equation
\begin{equation}\label{QSSA}
\frac{m_1(s)}{V} =  d(\tilde q) =2 \kappa \tilde{q}^2+ m_2(\tilde q), \qquad\text{with}\quad m_2(\tilde q) = \frac{\tilde q}{1+ \tilde q}, 
\end{equation}
where the function $\tilde{q}= \tilde{q}(s)$ is the unique positive solution to \eqref{QSSA} (recall the monotonicity in $\tilde{q}$) as long as it exists. In particular, $\tilde{q}$ is not defined when $\kappa=0$ and $\frac{m_1(s)}{V}\ge1$ which contradicts the bound $m_2(\tilde q)<1$
for all $\tilde q \in \mathbb{R}_+$. 
Note, that the explicit solution to the third order polynomial \eqref{QSSA} by Cardano's formula is of little practical use
and that we will make use of $\tilde{q}$ either implicitly or approximatively.  

%The following Lemma \ref{lem:tildeq} provides conditions which ensure $\tilde q \le 1$ and  $\tilde q \le 1/2$, respectively. 
%Note that in practical examples, we expect QSSA levels of $q$ to be rather small, typically smaller than one. For $\tilde q \le 1/2$, Lemma \ref{lem:tildeq} also states a useful approximation formula of $\tilde q$.

The following Lemma \ref{lem:tildeq} states a parameter condition which imply $\tilde q \le 1/2$ and provides  a useful approximation formula of $\tilde q$ in this case. 
Note that in practical examples, we expect QSSA levels $\tilde q$ to be rather small, in particular if $V$ and/or $\kappa$ are large enough. 

\begin{lemma}[Condition ensuring $\tilde{q}\le \frac{1}{2}$ and explicit approximation formula]\label{lem:tildeq}\hfill \\ 
Assume that the parameters $(V,\kappa)$ satisfy
%$\kappa \ge \max\bigl\{\frac{1}{2V} - \frac{1}{4},0\bigr\}$. Then, solutions to $\eqref{QSSA}$ satisfies 
%\begin{equation*}
%\tilde q \le 1, \qquad \text{for all} \quad s\le 1,\ K>0. %\ \kappa \ge \max\Bigl\{\frac{1}{2V} - \frac{1}{4},0\Bigr\}
%\end{equation*}
%Under the tighter condition, 
the condition
\begin{equation}\label{Vkappacon}
V \ge \frac{4}{1+2\kappa} \qquad \iff \qquad 
\kappa \ge \frac{2}{V} - \frac{1}{2}.
\end{equation}
For instance, suppose $V\ge4$ and $\kappa\ge0$.  

Then, $\tilde q \le \frac{1}{2}$ holds for all $s\le 1$ and $K>0$ and we can approximate  (within 10\%) 
\begin{equation}\label{QSSAapprox}
 \tilde{q} \sim \frac{1}{V}\frac{2}{\sqrt{1 + 4 (2\kappa -1)/V} +1}
= \begin{cases}
O(\frac{1}{V}) & \text{if } V \gg \kappa,\\
O(\frac{1}{\sqrt{2\kappa V})} & \text{if } V \ll \kappa.
\end{cases}
%\qquad \text{provided}\quad \tilde{q} \le \frac{1}{2}.
\end{equation} 
\end{lemma}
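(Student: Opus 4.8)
The plan is to establish the two assertions in turn. For the bound $\tilde q\le\tfrac12$, I would use that $d(q)=2\kappa q^2+m_2(q)$ is strictly increasing on $[0,\infty)$, since $d'(q)=4\kappa q+(1+q)^{-2}>0$; hence \eqref{QSSA} has a unique nonnegative root $\tilde q$ and, by monotonicity, $\tilde q\le\tfrac12$ holds if and only if $m_1(s)/V\le d(\tfrac12)$. A direct evaluation gives $d(\tfrac12)=\tfrac{\kappa}{2}+\tfrac13$. Because $m_1(s)=s/(K+s)<1$ for every $s$ and every $K>0$, it is enough to check $1/V\le d(\tfrac12)$, and the hypothesis \eqref{Vkappacon} rearranges to $1/V\le\tfrac{1+2\kappa}{4}=\tfrac14+\tfrac{\kappa}{2}$, which lies strictly below $\tfrac13+\tfrac{\kappa}{2}=d(\tfrac12)$. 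Thus $\tilde q<\tfrac12$ with a margin; the slightly stronger constant $\tfrac{1+2\kappa}{4}$ (rather than the sharp $d(\tfrac12)$) is chosen precisely so that the \emph{approximate} root of the next step also stays below $\tfrac12$.

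For the formula \eqref{QSSAapprox} I would use the exact identity $m_2(q)=q-q^2+\tfrac{q^3}{1+q}$, which splits $d$ as $d(q)=\hat d(q)+\tfrac{q^3}{1+q}$ with the quadratic part $\hat d(q):=(2\kappa-1)q^2+q$. Dropping the cubic remainder, solving $\hat d(\bar q)=m_1(s)/V$ for its positive root and rationalising reproduces the right-hand side of \eqref{QSSAapprox} once the bound $m_1(s)\le1$ is inserted; moreover, since $\hat d\le d$ and both are increasing on $[0,\tfrac12]$, the resulting $\bar q$ is a genuine upper bound for $\tilde q$, uniformly in $s\le1$ and $K>0$. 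The two regimes then follow from the discriminant $1+4(2\kappa-1)/V$: for $V\gg\kappa$ it tends to $1$, giving $\bar q\sim 1/V$, while for $V\ll\kappa$ it behaves like $8\kappa/V\gg1$, so that the square root is $\sim2\sqrt{2\kappa/V}$ and $\bar q\sim 1/\sqrt{2\kappa V}$.

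The quantitative ``within $10\%$'' claim is where I expect the real difficulty. Comparing the true root $d(\tilde q)=y$ with the approximate root $\hat d(\bar q)=y$ at a common $y=m_1(s)/V$, the identity $\hat d(\bar q)-\hat d(\tilde q)=\tfrac{\tilde q^3}{1+\tilde q}$ together with the mean value theorem gives $\bar q-\tilde q=\tfrac{\tilde q^3}{(1+\tilde q)\,\hat d'(\xi)}$ for some $\xi\in[\tilde q,\bar q]$, with $\hat d'(q)=2(2\kappa-1)q+1$; hence the relative error equals $\tfrac{\tilde q^2}{(1+\tilde q)\,\hat d'(\xi)}$, which is small as soon as $\tilde q$ is small and $\hat d'$ is bounded below. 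The obstacle to a \emph{uniform} $10\%$ bound is exactly that $\hat d'$ degenerates near the vertex of $\hat d$: for $\kappa$ near $0$ and $\tilde q$ near $\tfrac12$ the denominator is small and a moderate relative error in $d$ is amplified in the inversion, so that at the corner $(V,\kappa)=(4,0)$ the formula overestimates by roughly $50\%$. I would therefore assert the $10\%$ accuracy only on the practically relevant part of the admissible region — larger $V$ and/or $\kappa$, where $\tilde q\ll1$ and $\hat d'$ stays away from zero — justifying the precise figure by the explicit relative-error expression above together with a numerical check, and flagging the degenerate corner as a genuine exception rather than a case the bound is meant to cover.
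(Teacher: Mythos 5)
Your argument follows the paper's proof almost line for line in its first two steps: the same monotonicity of $d$, the same evaluation $d(\tfrac12)=\tfrac{\kappa}{2}+\tfrac13$, and the same splitting $d(q)=(2\kappa-1)q^2+q+\tfrac{q^3}{1+q}$ with the cubic remainder dropped and the quadratic solved in rationalised form; your added remark that $\bar q\ge\tilde q$ because $\hat d\le d$ is a small bonus the paper does not state. Where you genuinely diverge is the treatment of the ``within $10\%$'' claim, and here your version is the more careful one. The paper argues via a formal expansion $\tilde q=\tilde q_0+\epsilon\tilde q_1$ with $\epsilon=\tilde q_0^3/(1+\tilde q_0)\le\tfrac1{12}$ once $\tilde q_0\le\tfrac12$, and essentially reads off ``$O(10^{-1})$'' from that bound on $\epsilon$ alone --- but its own correction coefficient $\tilde q_1=-1/\sqrt{1+4I(2\kappa-1)}$ diverges precisely when the discriminant vanishes, i.e.\ at equality in \eqref{VVVVV}, so bounding $\epsilon$ does not by itself control the relative error. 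Your mean-value computation makes the amplification factor $1/\hat d'(\xi)$ explicit and your corner example is correct: at $(V,\kappa)=(4,0)$ with $m_1\to1$ the exact root is $\tilde q=\tfrac13$ while formula \eqref{QSSAapprox} returns $\tfrac12$, a $50\%$ overestimate. So your restriction of the $10\%$ accuracy to the interior of the admissible region (where $\hat d'$ is bounded away from zero) is not a weakness of your proof but an honest repair of a gap in the paper's; the paper's statement as written is only accurate ``within $10\%$'' away from the boundary curve $V=4/(1+2\kappa)$, a caveat worth adding to the lemma itself.
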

\begin{proof}%\textcolor{red}{To be finished}
Since $d(\tilde q)$ is strictly monotone increasing in $\tilde q$, the QSSA equation \eqref{QSSA}, i.e. 
\begin{equation*} 
d(\tilde q) = 2 \kappa \tilde{q}^2 + \frac{\tilde{q}}{1+ \tilde{q}}= \frac{m_1(s)}{V}=:I.
\end{equation*} 
has a unique positive solution $\tilde q$ for every input value $I>0$. 
The explicit solution by Cardano's formula  is too cumbersome for practical use. Instead, we derive an approximation formula for sufficiently small $\tilde{q}$, i.e.   inputs 
$I$. In particular, since $d(\tilde{q})$ is monotone increasing, a sufficient condition for $\tilde{q}\le \frac{1}{2}$ is obtained from estimating $\frac{m_1}{V}\le \frac{1}{V}$ for all $s\ge0$ and $K>0$: 
\begin{equation*}
\frac{1}{V} \le d\Bigl(\tilde{q}=\frac{1}{2}\Bigr) = \frac{\kappa}{2} + \frac{1}{3}, 
\qquad \iff \qquad  \kappa \ge \frac{2}{V} - \frac{2}{3}.
\end{equation*}
Next, for sufficiently small $\tilde{q}$ (to be determined), we can rewrite 
\begin{equation}\label{QSSAApprox} 
d(\tilde{q}) = 2 \kappa \tilde{q}^2 + \tilde{q} -\tilde{q}^2 + \underbrace{\frac{\tilde{q}^3}{1+\tilde{q}}}_{=\epsilon=O( \tilde{q}^3)}=I
\end{equation} 
and the last term on the left hand side will be assumed of lower order. A formal asymptotic expansion $\tilde{q}=\tilde{q}_0+\epsilon \tilde{q}_1$ yields 
\begin{equation*}
\tilde{q}_0 = \frac{\sqrt{1 + 4 I (2\kappa -1)} -1}{2(2\kappa-1)}= 
\frac{2I}{\sqrt{1 + 4 I (2\kappa -1)} +1}, \qquad
\tilde{q}_1 = - \frac{1}{\sqrt{1 + 4 I (2\kappa -1)}}. 
\end{equation*}

In order to ensure the approximation $\tilde{q}_0$ to be real  
in the case
when $0\le2 \kappa<1$,  
it is sufficient to impose the constraint
\begin{equation}\label{VVVVV}
\frac{1}{V}\le \frac{1}{4}\frac{1}{1-2\kappa}\in\Bigl[\frac{1}{4},\infty\Bigr)\quad\text{for}\quad \kappa\in\Bigl[0,\frac{1}{2}\Bigr).
\end{equation}
From \eqref{QSSAApprox}, we calculate that 
$$
\tilde{q}_0\le \frac{1}{2} \quad\Longrightarrow\quad 
\epsilon(\tilde{q}_0) = \frac{\tilde{q}_0^3}{1+\tilde{q}_0} \le \frac{1}{12}.
$$
Thus, $\epsilon(\tilde{q})=\epsilon(\tilde{q}_0)+O(\epsilon)$ is 
 (at least) of order $O(10^{-1})$. On the other hand, straightforward calculation shows that $\tilde{q}_0\le \frac{1}{2}$ is equivalent to $I\le (1+2\kappa)/4$. Since $m_1(s)\le \tfrac{1}{V}$, a sufficient condition independent of $s$ and $K$ is 
$$
\frac{1}{V}\le \frac{1+2\kappa}{4} \quad\Longrightarrow\quad \tilde{q}_0\le \frac{1}{2},
$$
which is \eqref{Vkappacon} and also implies \eqref{VVVVV} and comletes the proof.
%
%
%For $\kappa>\frac{1}{2}$ and $I=\frac{m_1(s)}{V} = \frac{1}{V}\frac{s}{K+s}\le \frac{1}{V}\frac{1}{K+1}\le \frac{1}{2}$, the above condition holds true for e.g. $V\ge1$ and $K\ge1$ or any $K$ and $V\ge2$.
\end{proof}

%
%First we remark
%
%\begin{remark}[Positive solutions to \eqref{QSSA}]
%We recall $m_1(s)=\frac{s}{K+s}<1$ for all $0<K$ and $0 \le s$. Hence, we note that in case $\kappa=0$
%\eqref{QSSA} reduces to $\frac{s}{K+s} \frac{1}{V} = \frac{\tilde{q}}{1+\tilde{q}}$, which only has a positive solution $\tilde{q}(s)$ for all $s\ge 0$ provided that $V\ge 1$. If $V<1$, then eq. \eqref{QSSA} 
%has a positive solution $\tilde{q}(s)$ either provided that $\kappa>0$ or for sufficiently small $s$ (depending on $K)$,
%However, since we are interested in QSSAs independent of the substrate level $s$, we disregard the second case and only consider solutions $\tilde{q}$ as potential QSSAs, 
%which exist for all possible values of $s$ in the following.
%\end{remark}

\subsection{Conditions for QSSA} \label{sec:qssaconditions}
A QSSA which approximates $\eps\dot q\simeq 0$ is expected to feature an initial layer 
whenever the initial data $q(0)$ do not equal the initial values of the approximation $\tilde q (s(0))$, see e.g. \cite{Holmes-1995}. Indeed, we expect 
$\dot{q} \sim O(\eps^{-1})$ during the initial layer except for initial data $q(0)$, which are already $O(\eps)$-close to 
$\tilde q (s(0))$. 

Here, it is natural to consider initial data $q(0)=0$, i.e. that there is initially no DG, which implies that there is an initial layer behaviour.  
(Other initial data which are not  $O(\eps)$-close to $\tilde q(s(0))$ can be treated the same.) 
For a QSSA to be a useful and valid approximation, the initial layer must be short compared to the dynamics which is of 
interest.  

In the following, we derive necessary conditions on the parameters as well as bounds on the shortness of the 
initial layer. 
%and how fast $q(t)$ approaches $\tilde q(s(t))$ in order to be able to neglect the initial layer. 
The proximity of $q(t)$ to $\tilde{q}(s(t))$
is -- in our opinion -- best studied after rewriting \eqref{eq:qq} as perturbation $\pi$ around the QSSA variable $\tilde{q}(s(t))$, i.e. 
\begin{equation}
q(t) = \tilde{q}(s(t)) + \pi(t).
\end{equation}
By using the identities $q^2 = \tilde{q}^2 + \pi (2 \tilde{q} + \pi)$ and 
$m_2(q) = m_2(\tilde{q}) + \frac{\pi}{(1+\tilde{q})(1+\tilde{q}+\pi)}$, 
we rephrase the right hand side of \eqref{eq:qq} as 
\begin{equation}\label{eq:qpi}
 \dot{q} = - L V \pi \underbrace{\left[ 2 \kappa (2 \tilde{q} + \pi) +\frac{1}{(1+\tilde{q})(1+\tilde{q}+\pi)}\right]}_{>0}. 
\end{equation}
Note that the squared bracket on the r.h.s. is always strictly positive 
(since $q = \pi +\tilde{q} \ge 0$). Hence, the equality $\mathrm{sign}(\dot{q})= - \mathrm{sign}(\pi)$ follows, which is a useful observation. 

\begin{figure}[htb]
\centering
	\includegraphics[width=1\textwidth]{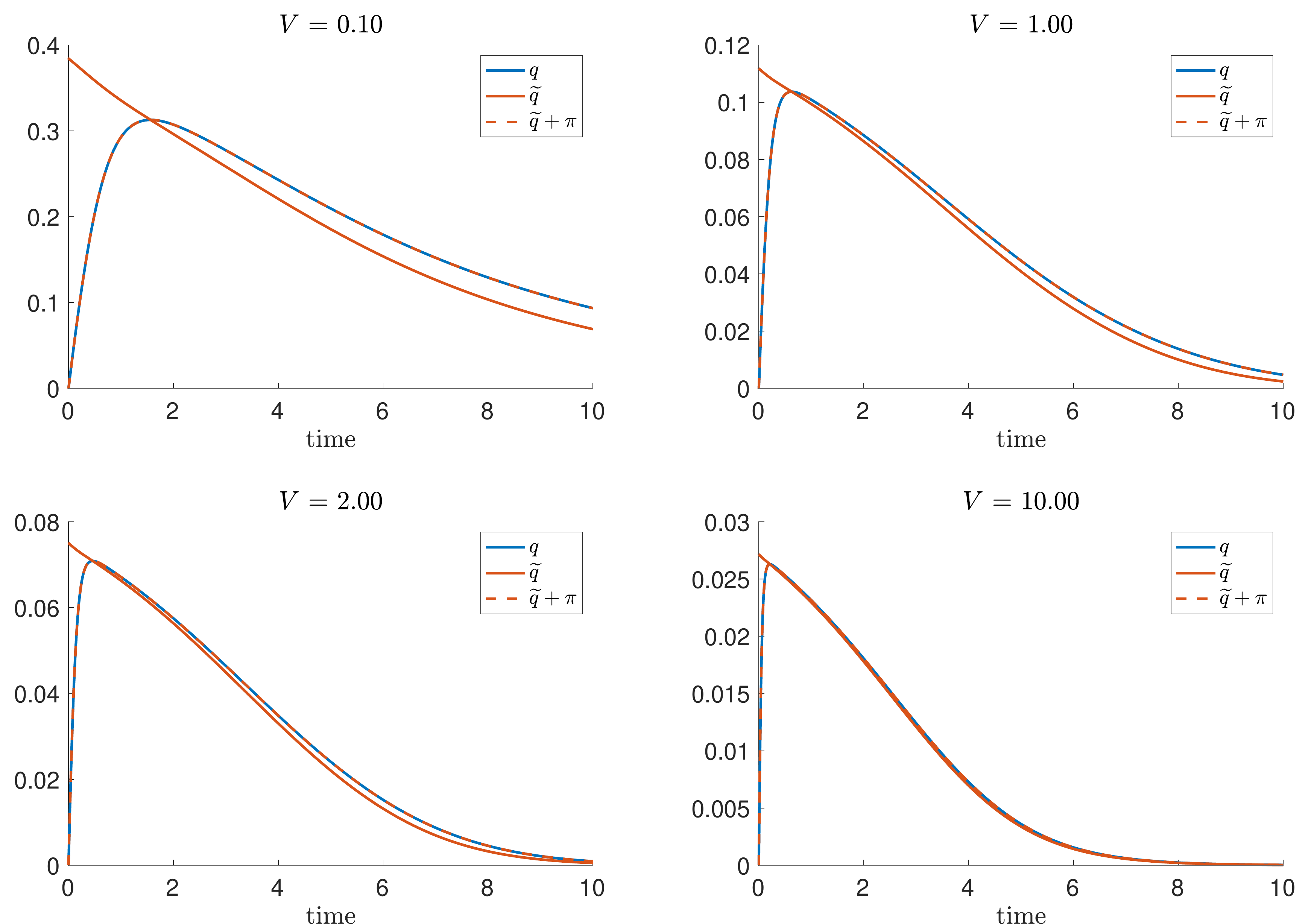}
\caption{The evolution of $q(t)=\tilde{q}(s(t))+\pi(t)$ and the formal QSSA $\tilde q(s(t))$ for $L = K = 1$, $\kappa = 16$ and different values  $V=0.1,1,2,10$. During an initial layer phase, $q$ grows from $q(0)=0$ to the maximum where $q(t_m)=\tilde q(s(t_m))$ and $\pi=0$ holds. For the subsequently decay phase, $q$ can be approximated by a QSSA $\tilde{q}(s(t))$ for sufficiently large parameters $V$. Note that $\pi(t)>0$ holds during the decay phase.}
	\label{fig:QSSA}
\end{figure}

Figure~\ref{fig:QSSA} compares the evolution of $q(t)$ from 
\eqref{eq:qs} and $\tilde q(s(t))$ for different values of $V$: Starting at the initial data without DG, i.e. $q(0)=0$ resp. $\pi(0)=-\tilde{q}(s(0))$, TG hydrolysis yields 
$s(t)$ to continuously decreases from it's starting value $s(0)=1$
(not plotted) and $q(t)$ to grow towards a maximal value $q(t_m)$ at a time $t_m>0$ when $\dot q(t_m)=0$ holds. Thus \eqref{eq:qpi} implies $\pi(t_m)=0$ and $q(t_m)=\tilde q(s(t_m))$.
At time $t_m$ of maximal DG concentration, 
the rates of DG hydrolysis and DG transacylation balance exactly TG hydrolysis. 

For times larger than $t_m$, the DG concentration $q(t)$ decays. Hence, \eqref{eq:qpi} implies $\pi(t)>0$ for $t>t_m$, 
which can be interpreted that 
the decay of the rates of DG hydrolysis and transacylation lags behind 
the decay of the TG hydrolysis rate
caused by the decay of $s(t)$, 
see also phaseportrait Fig.~\ref{fig:PP-SQ}. 
\smallskip

Altogether, Figure~\ref{fig:QSSA} suggests that a QSSA can be used provided that the initial layer interval $[0,t_m]$ is sufficiently short and that after $t_m$ the perturbation $\pi(t)$ is sufficiently small. Figure~\ref{fig:QSSA} illustrates that we can expect a QSSA for $q$ to hold for sufficiently large values of $V$. Analog plots can be made in favour if a QSSA for sufficiently large values of $L$ and $\kappa$. 
\smallskip

We would like to estimate the duration of the initial layer in the following.  
%The following formulas prepare the estimation of the duration $t_m$ of the initial layer and smallness of $\pi$. 
First, we derive the evolution equation of $\pi(t)$. By taking the derivate of \eqref{QSSA}, we calculate with $\tilde q$ short for $\tilde q(s)$
\begin{equation*}
\dot{\tilde{q}} =
\dot{\tilde{q}}(s) = \frac{m_1'(s) }{V d'(\tilde{q})} \dot{s},
%\qquad  \text{where}\quad m_1'(s) = \frac{K}{(K+s)^2}, 
%\text{ and }
%d'(q) = 4 \kappa {q}+ m'_2(q)>0
 %= \frac{m_1' }{V d'(\tilde{q})} \left( -m_1(s) + V \kappa q^2\right)
\end{equation*}
where we use the formulas: 
\begin{equation}\label{useful}
m_1'(s) = \frac{K}{(K+s)^2} \qquad\text{and}\qquad
d'(q) = 4 \kappa {q}+ m'_2(q)>0, \quad m_2'(q) = \frac{1}{(1+q)^2}, 
\end{equation}
By recalling $\dot q = \dot {\tilde q} + \dot \pi$, we obtain 
the evolution equation for $\pi$: 
\begin{equation}\label{eq:pi}
\dot{\pi} = - L V \pi \left[ 2 \kappa (2 \tilde{q} + \pi) +\frac{1}{(1+\tilde{q})(1+\tilde{q}+\pi)}\right] - \frac{m_1' (s)}{V d'(\tilde{q})} \dot{s},
\end{equation}
subject to the initial data $\pi(0)=-\tilde{q}(s(0))=-\tilde q(1)<0$.

The time scales of eq. \eqref{eq:pi} are key to understanding necessary conditions for a QSSA. 
%Fig.~\ref{fig:QSSA} shows 
%a prototypical evolutions for different $V$: Starting at $q(0)=0$ (i.e. $\pi(0)=-\tilde{q}(s(0))$), $q(t)$ grows to a maximal value $q(t)=\tilde{q}(t)$ (i.e. $\pi(t)=0$) and then decays with $\pi(t)>0$. In parallel, $s(t)$ decreases from the starting value $s(0)=1$, recall also phaseportrait Fig.~\ref{fig:PP-SQ}. 
The first term in \eqref{eq:pi} acts as a stabilising terms and aims  to relax $\pi(t)$ towards zero (since the square bracket is positive). 
On the other hand, the second term is always positive (as $\dot s<0$). Hence, it  
helps $q$ to reach the maximal level $\tilde{q}(s(t_m))$ faster, but afterwards becomes a destabilising factor causing $\pi>0$ as long as  $\dot {q}<0$ resp. $\dot{s}<0$. Roughly speaking, the second term quantifies the adaptation of the QSSA $\tilde q$ due to the decay of $s$.

As a first observation, we suppose $\dot{s}\ll1$ and thus the second term on the right hand side of \eqref{eq:pi} negligible. Then, linearisation of \eqref{eq:pi} (i.e. rescaling $\pi \to \eps \pi$ and letting $\eps \to 0$), yields indeed exponential stability of $\pi=0$:  
\begin{equation*}
\dot{\pi} \simeq - L V \pi \left[ 4 \kappa \tilde{q}  +\frac{1}{(1+\tilde{q})^2}\right] =   - L V d'(\tilde{q}) \,\pi, \qquad \text{in the case}\quad \dot{s}\ll1.
%\qquad \text{where}\quad d'(q) = 4 \kappa {q}+ m'_2(q)>0
\end{equation*}
The eigenvalue $-L V d'(\tilde{q})<0$ characterises the linear stability of the QSSA $\tilde q(s)$ provided that the decay of $s$ is negligible.
\begin{remark}
The fast timescale of classical QSSA for a single substrate - single enzyme reaction is often calculated from the eigenvalue $L V d'(\tilde{q})$ \cite[p.~179]{Murray}, i.e.  $T = 1/(L V d'(\tilde{q}))$. In the following, we will derive a better estimation of the timescale, see \eqref{eq:tsT} below.
\end{remark}

\subsection{Time scale analysis}
First, we consider \eqref{eq:pi} subject to the initial data $\pi(0)= -\tilde{q}_0= -\tilde{q}(s(0))$, $q(0)=0$  and $\dot s(0) = -m_1(s(0))$ and calculate 
\begin{equation}\label{eq:pi0}
\dot \pi \vert_{t=0} =  L V \underbrace{\tilde{q}_0  \left[ 2 \kappa \tilde{q}_0 +\frac{1}{(1+\tilde{q}_0)}\right]}_{=m_1/V} + \frac{m_1'  m_1}{V d'(\tilde{q}_0)}  = m_1 L + \frac{m_1' m_1}{V d'(\tilde{q}_0)}, 
\qquad \tilde q_0 := \tilde q(s(0)).
\end{equation} 
Secondly, at $t=t_m$ when $\pi(t_m)= 0$, $q(t_m)=\tilde q(s(t_m))$ and  $\dot s = -m_1+ V\kappa \tilde q^2$, \eqref{eq:pi} yields 
\begin{equation}\label{eq:pitm}
\dot \pi \vert_{t=t_m} =  \frac{m_1'}{V d'(\tilde{q}_m)}\left[m_1  - V\kappa \tilde q_m^2\right] = \frac{m_1'}{V d'(\tilde{q}_m)}\left[\frac{m_1}{2} + \frac{V m_2(\tilde q_m)}{2}\right], 
\qquad \tilde q_m := \tilde q(s(t_m)),
\end{equation} 
where the last equality holds due to $\dot q(t_m)=0$, i.e. $m_1 = V d(q)$ and $q(t_m)=\tilde q_m$.

Thirdly, we consider the halfway-up timepoint when $\pi = -\frac{\tilde{q}(s)}{2}$ holds in $[0,t_m]$. Evaluating $\dot \pi$ yields
\begin{align*}
\dot \pi \vert_{\pi = -\frac{\tilde{q}(s)}{2}} &=  L V \underbrace{\left[ \frac{3}{2} \kappa \tilde{q}^2 +\frac{1}{(2+\tilde{q})}\frac{\tilde{q}}{(1+\tilde{q})}\right]}_{\frac{1}{(2+\tilde{q})} \frac{m_1}{V} \le \cdots \le \frac{3}{4}\frac{m_1}{V}\quad\ } + \frac{m_1'} {V d'(\tilde{q})}\!\!\!\underbrace{\left[{m_1} -V\kappa {q}^2\right]}_{\frac{m_1}{2}+ \frac{V m_2}{2}\le \cdots \le m_1\qquad}   
%= O(1) \frac{\dot \pi \vert_{t=0}+\dot \pi \vert_{t=t_m}}{2},
\end{align*} 
where the estimates of first term used that $m_1 = V d(\tilde{q})$ and the lower bound in the second term follows 
from $\dot q>0$ impllying $2V\kappa q^2 < m_1 - Vm_2$. 
Note also that $\frac{1}{2+\tilde{q}}\lesssim \frac{1}{2}$ for small values of $\tilde q$, which we expect for large values of $V$ and $\kappa$. Hence, we observe that 
\begin{equation*}
\dot \pi \vert_{\pi = -\frac{\tilde{q}(s)}{2}} \simeq \frac{\dot \pi \vert_{t=0}+\dot \pi \vert_{t=t_m}}{2}
\end{equation*}
%is of the same order as the average of $\dot \pi$ at times $t=0$ and $t_m$. 
We therefore take this average as a typical value of $\dot \pi$ 
throughout the initial layer and use it as an estimation $T$ of the duration $t_m$ of the initial layer:
\begin{equation} \label{eq:tsT}
\tilde q(s(t_m)) =  T \frac{\dot \pi \vert_{t=0}+\dot \pi \vert_{t=t_m}}{2} \qquad\Rightarrow\qquad
T := \frac{2 \tilde q(s(t_m))}{\dot \pi \vert_{t=0}+\dot \pi \vert_{t=t_m}}, 
\end{equation}
(and where of course $T\simeq t_m$ should hold for a good estimation $T$). 
%Point out that $\dot q$ in \eqref{eq:qq} is maximal at $q(0)=0$, 
%decaying as $q(t)$ increases until $\dot q(t_m)=0$, we consider expect a somewhat similar behaviour for $\dot \pi = \dot q -\dot \tilde q$, although the contribution of $\dot \tilde q$ is important 
%(and the reason why to consider $\pi$ rather than $q$ directly). 
%Nevertheless, it seems plausible to take an average of the speed 
%$\dot \pi \vert_{t=0}$ and $\dot \pi \vert_{t=t_m}$ in order to estimate a typical duration T of the initial layer:
%\begin{equation}
%T := \frac{\tilde q(s(t_m))}{\frac{1}{2}\left(\dot \pi \vert_{t=0}+\dot \pi \vert_{t=t_m}\right)}, 
%\end{equation}
%

In the following, we estimate $T$ in terms of the parameters. First, by recalling \eqref{QSSA}, we observe that 
\begin{equation*}
V \tilde q d'(\tilde q) = V \underbrace{\left[4 \kappa \tilde q^2 + \frac{\tilde q}{(1+\tilde q)^2}\right]}_{\frac{1}{(1+\tilde{q})} \frac{m_1}{V} \le \cdots \le 2\frac{m_1}{V}\quad\ } = \theta(\tilde q) m_1, \qquad 
\text{where} \quad \theta(\tilde q) \in \left[\frac{1}{1+\tilde q}, 2\right],\quad \forall \tilde q\ge0,
\end{equation*}
and moreover, $\theta\in[\tfrac{1}{2},2]$ for $\tilde q \le 1$. Next, we rewrite $T$ as 
\begin{equation}
T %= \frac{2 \tilde q_m}{\dot \pi \vert_{t=0}+\dot \pi \vert_{t=t_m}} 
= \frac{1}{T_1^{-1}+T_2^{-1}+T_3^{-1}}
\end{equation}
and identify the terms $T_1$ and $T_2$ in \eqref{eq:pi0} and $T_3$ in \eqref{eq:pitm} using the notations $\tilde q_0 := \tilde q(s(0)) = \tilde q(1)$ and $q_m := \tilde q(s(t_m))$:
\begin{align}
T_1 &= \frac{2}{m_1(s(0))}\frac{\tilde q_m}{L} = 2(K+1)\frac{\tilde q_m}{L}, \label{T1} \\
T_2 &=  \frac{2 }{m_1'(s(0))} \frac{\tilde q_m}{\tilde q_0}\frac{V \tilde q_0 d'(\tilde{q}_0)}{m_1(s(0))}= \frac{2 (K+1)^2}{K} \frac{\tilde q_m}{\tilde q_0} \theta, \qquad 
\theta(\tilde q) \in\left[\frac{1}{1+\tilde q},2\right],\label{T2} \\
T_3 &=  \frac{2 }{m_1'(s(t_m))}
\frac{\tilde q_m V d'(V\tilde{q}_m)}{{m_1} -V\kappa \tilde{q}^2_m}
= \frac{2 (K+s(t_m))^2}{K} \frac{\theta}{\psi},\qquad  \psi\in[\tfrac{1}{2},1],\label{T3} 
\end{align}
where we have used for $T_3$ in \eqref{eq:pitm} that $\tilde q_m = \tilde q(t_m) = q(t_m)$ and ${m_1} -V\kappa {q}^2=\psi m_1$ for some $\psi\in[\tfrac{1}{2},1]$. 

In particular, we will estimate \eqref{T2} and \eqref{T3} as
\begin{equation*}
T_2 =  \frac{\tilde q_m}{\tilde q_0} \frac{2 (K+1)^2}{K} \theta \le \frac{4(K+1)^2}{K},\qquad 
T_3 = \frac{2 (K+s(t_m))^2}{K} \frac{\theta}{\psi} \le \frac{8(K+1)^2}{K},
\end{equation*}
since $\tilde q_m\le \tilde q_0$, $\theta\le 2$ and $\psi^{-1} \le 2$.
We then obtain the upper bound 
\begin{equation}\label{Tbound}
T \le \frac{2(K+1)^2}{\frac{L}{\tilde q_m}(K+1) + \frac{3}{4} K}.
\end{equation}
%Note that with some additional considerations, this bound could be slightly improved but in your opinion with no relevant gain (\textcolor{red}{to be checked numerically}) yet at the price of 
%some cumbersome calculations and formulas. 
\medskip
 
The time scale $T$ has to be compared a time scale representing the decay of the substrate $s$. Since
$\frac{m_1}{2} \le -\dot s \le m_1$ holds during the initial layer (due to $\dot q>0$ implying $2V\kappa q^2 < m_1 - Vm_2$), we choose the Michaelis Menten model $\dot s = - \frac{3}{4} m_1(s)$ to provide a reference time of the decay of $s(t)$. Straightforward integration with $s(0)=1$ yields 
\begin{equation*}
K \ln(s(t)) + s(t)= 1 - \frac{3}{4} t.
\end{equation*}
%a typical time scale for the decay of $s(t)$ down to one precent, i.e. $s(T_{-2}) = 10^{-2}$,
%\begin{equation}\label{T-2}
%which calculates as $T_{-2} = \frac{99}{75} + \frac{8}{3} K \ln(10)$. %\simeq \frac{4}{3} + 6 K.
%\end{equation}
As comparison for the times scale of the initial layer, we introduce the time $T_{90\%}$ when $s(T_{90\%})=\tfrac{9}{10}$, 
which means that at time $T_{90\%}$ about 90\% of the TG substrate is still to be processed. We calculate  
%\begin{equation}\label{T-75}
%T_{75\%} = \frac{1}{3} + \frac{4}{3} K \ln\Bigl(\frac{4}{3}\Bigr) \simeq \frac{1}{3}(1+ K).
%\end{equation}
\begin{equation}\label{T-90}
T_{90\%} = \frac{2}{15} + \frac{4}{3} K \ln\Bigl(\frac{10}{9}\Bigr) \simeq \frac{2}{15}(1+ K).
\end{equation}
%Note that for $K\simeq1$,  $T_{75\%}$ is about a factor 10 smaller than $T_{-2}$ and thus a good time scale to compare with 
%$T$ in the sense that if $T\le T_{75\%}$, then a QSSA following the initial layer can be still considered a useful approximation of the processing of the remaining (at least) $75\%$ of the TO substrate. 

As a condition on the estimation $T$ of the duration of the initial layer, we impose 
%Altogether, we are able to derive the following condition on the parameters to ensure
\begin{equation}\label{TT75}
\frac{2(K+1)^2}{\frac{L}{\tilde q_m}(K+1) + \frac{3}{4} K} = T \le T_{90\%} = \frac{2}{15} (1+K) \iff
\frac{L}{\tilde q_m} \ge 15 - \frac{3}{4} \frac{K}{K+1}.
\end{equation}

We state the following Proposition:
\begin{proposition}\label{prop:QSSA}
Denote by $t_m$ the time point when $q(t_m)$ is maximal, thus satisfying $q(t_m)=\tilde q(s(t_m))=:\tilde q_m$. Assume that the (not explicit, but useful) condition
\begin{equation}\label{Initiallayershort}
\frac{L}{\tilde q_m} \ge 15 - \frac{3}{4} \frac{K}{K+1}
\end{equation}
holds. Then, $\tilde q$ can be considered suitable for a QSSA in the sense that after the estimation $T$ of the duration of the initial layer least $90\%$ of the TO substrate still needs processing. 
The weaker, yet simplified condition 
\begin{equation}\label{simple}
\tilde q_m = \tilde q(s(t_m)) \le \frac{L}{15},
\end{equation}
can serve as a useful rule of thumb, which is independent of $K$. 

Both conditions \eqref{Initiallayershort} and \eqref{simple} are satisfied either be $L$ being large or by
$\tilde q(s(t_m))$ to be sufficiently small at the end time $t_m$ of the initial layer. Smallness of $\tilde q(s(t_m))$ is implied 
from $V$ or $\kappa$ (or both) being sufficiently large, cf. \eqref{QSSA} and Lemma~\ref{lem:tildeq}.

By inserting \eqref{simple} into \eqref{QSSA} and using $m_1\le 1$,
we obtain the explicit conditions 
\begin{equation} \label{kappaV}
\kappa \ge \frac{15}{2L}\left(\frac{15}{LV} - \frac{1}{1+\frac{L}{15}}\right), \quad\text{or}\quad
V \ge \frac{1}{\frac{2\kappa L^2}{15^2} + \frac{1}{1+\frac{15}{L}}}
\end{equation}
Altogether, eqs. \eqref{simple} and \eqref{kappaV} underline that we can expect a QSSA of the $q$ equation to be applicable if $L$, $V$ and/or $\kappa$ are sufficiently large.  
\end{proposition}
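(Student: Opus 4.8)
The plan is to read off the proposition as the synthesis of the time-scale analysis already completed, so that almost every assertion is either one of the inequalities derived above or an elementary simplification of it. I would first recall that the estimate $T$ of the initial-layer duration obeys the upper bound \eqref{Tbound}, that $T_{90\%}$ in \eqref{T-90} is the reference time after which roughly $90\%$ of the TG substrate still remains, and that $s$ is decreasing throughout the relevant phase. The central claim --- that \eqref{Initiallayershort} renders $\tilde q$ suitable for a QSSA --- then amounts to requiring the upper bound \eqref{Tbound} for $T$ to stay below $T_{90\%}$. This is exactly the equivalence already recorded in \eqref{TT75}: dividing $\frac{2(K+1)^2}{\frac{L}{\tilde q_m}(K+1)+\frac34 K}\le \frac{2}{15}(1+K)$ by $2(K+1)$ and cross-multiplying yields $\frac{L}{\tilde q_m}\ge 15-\frac34\frac{K}{K+1}$. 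Hence this part needs nothing beyond quoting \eqref{Tbound}, \eqref{T-90} and \eqref{TT75}, together with the remark that $T\le T_{90\%}$ and monotone decay of $s$ give $s(T)\ge \frac{9}{10}$.

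I would next verify that the rule of thumb \eqref{simple} implies \eqref{Initiallayershort}. This is a one-line monotonicity observation: $\tilde q_m\le \frac{L}{15}$ gives $\frac{L}{\tilde q_m}\ge 15$, while the right-hand side $15-\frac34\frac{K}{K+1}$ of \eqref{Initiallayershort} is at most $15$ since $\frac34\frac{K}{K+1}\ge 0$ for $K>0$. Thus \eqref{simple} is a slightly stronger but $K$-independent sufficient condition, which is what justifies calling it a rule of thumb.

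For the large-parameter sufficiency I would argue directly that $\frac{L}{\tilde q_m}$ is large either because $L$ is large, or because $\tilde q_m$ is small; and that smallness of $\tilde q_m$ follows from $V$ or $\kappa$ being large through the QSSA relation \eqref{QSSA}. Indeed, since $d$ is strictly increasing and $d(\tilde q_m)=\frac{m_1}{V}\le \frac1V$, large $V$ forces $d(\tilde q_m)$, hence $\tilde q_m$, to be small, while the $2\kappa\tilde q^2$ term plays the same role for large $\kappa$; the quantitative rates $O(1/V)$ and $O(1/\sqrt{\kappa V})$ are supplied by Lemma~\ref{lem:tildeq}. Finally, to produce the explicit bounds \eqref{kappaV} I would use monotonicity of $d$ once more: $\tilde q_m\le \frac{L}{15}$ is equivalent to $\frac{m_1}{V}=d(\tilde q_m)\le d\bigl(\frac{L}{15}\bigr)$, and since $m_1\le 1$ it suffices to impose $\frac1V\le d\bigl(\frac{L}{15}\bigr)=\frac{2\kappa L^2}{15^2}+\frac{L/15}{1+L/15}$. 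Simplifying $\frac{L/15}{1+L/15}=\frac{1}{1+15/L}$ and then solving this single inequality for $V$ and for $\kappa$ returns precisely the two displayed forms in \eqref{kappaV}.

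Because the analytic work --- the averaging argument leading to the time-scale estimate \eqref{eq:tsT}, the bounds on $T_1,T_2,T_3$ obtained via $\theta\le 2$, $\psi^{-1}\le 2$ and $\tilde q_m\le \tilde q_0$, and the integration giving $T_{90\%}$ --- is already in place before the statement, the remaining steps are essentially bookkeeping and algebraic rearrangement. The only point requiring genuine care is the direction of the inequality in \eqref{TT75}: one must keep in mind that \eqref{Tbound} is an \emph{upper} bound for $T$, so that demanding this bound be $\le T_{90\%}$ is a sufficient (not merely necessary) condition for the estimated initial layer to be short. Once that is stated cleanly, no further obstacle remains.
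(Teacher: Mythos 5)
Your proposal is correct and follows essentially the same route as the paper: the proposition is stated there as a summary of the preceding time-scale analysis, and its justification is exactly the chain you give --- the upper bound \eqref{Tbound} compared against $T_{90\%}$ via the equivalence \eqref{TT75}, the monotonicity observation that $\tilde q_m \le L/15$ implies \eqref{Initiallayershort}, and the derivation of \eqref{kappaV} by inserting $\tilde q_m\le L/15$ into the monotone relation \eqref{QSSA} with $m_1\le 1$. Your remark that \eqref{simple} is in fact the \emph{stronger} (more restrictive) hypothesis, despite the paper's wording ``weaker,'' and your care about \eqref{Tbound} being an upper bound for $T$, are both accurate readings of the intended argument.
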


\begin{remark}
%The simplified condition 
%\begin{equation}\label{simple}
%\tilde q_m = \tilde q(s(t_m)) \le \frac{L}{6},
%\end{equation}
%can serve as a useful rule of thumb, which is independent of $K$ and requires $\tilde q(s(t_m)$ to be 
%sufficiently small at the end time $t_m$ of the initial layer. 
Looking back to numerical example Figure~\ref{fig:QSSA}, condition \eqref{Initiallayershort} for $L=1$
requires $\tilde{q}\lesssim 0.068$. This is clearly satisfied for $V=10$ where $\tilde{q}_m\sim 0.028$, 
but not for $V=2$, where $\tilde{q}\sim 0.073$. In this case, and for $K=1$, we calculate $T_{90\%}\sim 0.26$ while \eqref{Tbound} yields 
for $V=$ that $T\lesssim 0.28$ and we see that $T\le T_{90\%}$ is not ensured. 
Nevertheless, when looking at the case $V=2$ in Figure~\ref{fig:QSSA}, one would think that 
the QSSA constitutes already a very nice approximation of $q$ after the initial layer. 
One could suggest to relax the condition $T\le T_{90\%}$, to for instance $T\le T_{84\%}$ and calculate 
analog to above the condition $\tilde{q}\lesssim 0.11$, which is satisfied for $V=2$. 
However, we will elaborate this example further in the discussion and show that one has to be careful 
with being generous in accepting a QSSA. 
%is satisfied 
%in the image for $V=2$ where $\tilde q_m \simeq 0.07$ and $t_m\simeq 0.5< T_{75\%} = \frac{2}{3}$ 
%$V=1$ where $\tilde q_m \simeq 0.12$ and $t_m\simeq 0.6< T_{75\%} = \frac{2}{3}$
%(and very clearly satisfied for $V=10$). 
\end{remark}

\begin{remark}
We point out that $T_1=2(K+1)\frac{\tilde q_m}{L}$ is crucial for deriving condition \eqref{TT75}. Without $T_1$, we obtain the contradiction 
\begin{equation*}
\frac{2(K+1)^2}{\frac{3}{4} K} \le \frac{2}{15} (1+K) \iff
1 + \frac{19}{20} K \le 0.
\end{equation*}
This is not surprising since $T_2$ and $T_3$ are contributions, which are cause by the adaptation of $\tilde{q}$ due to the decay of $s(t)$ and it would be indeed surprising if this feedback alone would enable a sufficiently short initial layer.  
\end{remark}

\section{Asymptotic expansions}\label{sec:asymptotics}
In this section, we perform asymptotic expansions of the non-dimensionalised full model system \eqref{eq:02a}, which we recall for the convenience of the reader
\begin{equation}\label{eq:full}
\begin{cases}
\begin{aligned}
     	&\dot{s} = -m_1(s) + V \kappa q^2,\\[1mm]
    	&\frac{1}{L}\dot{q} = \left[ m_1(s) - V \left(m_2(q) + 2 \kappa q^2\right) \right], \\[1mm]
    	&\dot{p} = V \left( m_2(q) + \kappa q^2\right),\\[1mm]
	&\dot{f}  = m_1(s)+ V m_2(q), 
\end{aligned}
%\begin{aligned}
%     	&\dot{s} = -a m_1(s) + V \kappa a q^2,\\[1mm]
%    	&\dot{q} = L \left[ a m_1(s) - V \left(a m_2(q) + 2 \kappa a q^2\right) \right], \\[1mm]
%    	&\dot{p} = V \left( a m_2(q) + \kappa a q^2\right),\\[1mm]
%	&\dot{f}  = a m_1(s)+ V a m_2(q), \\[1mm]
%	&\dot{a}  = -\lambda a,
%\end{aligned}
\end{cases}
\quad\text{where}\quad
\begin{cases}
m_1(s) = \frac{s}{K + s},\\[1mm]
m_2(q) = \frac{q}{1 + q}, \\[1mm]
K = \frac{K_1}{s_0}, L = \frac{s_0}{K_2}, \\[1mm]
V = \frac{V_2}{V_1}, \kappa = \frac{\sigma K_2^2}{V_2},
\end{cases}
\end{equation}
We will consider the three cases which allow a QSSA: sufficiently large $L$, $V$ and $\kappa$. 
In the case of large $L$,  the left hand side of $q$ equation in \eqref{eq:full} features the small parameter $\eps=\tfrac{1}{L}$. By setting $\eps=0$, the $q$ equation 
reduces to the algebraic equation \eqref{QSSA}. The corresponding zero order QSSA model system reads as 
\begin{equation}\label{eq:reducedL}
\begin{cases}
\begin{aligned}
     	&\dot{\tilde{s}} = -m_1(\tilde{s}) + V \kappa \tilde{q}^2,\\[1mm]
    	&\dot{\tilde{p}} = V \left( m_2(\tilde{q}) + \kappa \tilde{q}^2\right),\\[1mm]
	&\dot{\tilde{f}}  = m_1(\tilde{s})+ V m_2(\tilde{q}), 
\end{aligned}
\end{cases}
\quad\text{where $\tilde{q}$ solves uniquely }\quad 
\frac{m_1(\tilde{s})}{V} =  2 \kappa \tilde{q}^2+ m_2(\tilde q).
%\begin{aligned}
%     	&\dot{s} = -a m_1(s) + V \kappa a q^2,\\[1mm]
%    	&\dot{q} = L \left[ a m_1(s) - V \left(a m_2(q) + 2 \kappa a q^2\right) \right], \\[1mm]
%    	&\dot{p} = V \left( a m_2(q) + \kappa a q^2\right),\\[1mm]
%	&\dot{f}  = a m_1(s)+ V a m_2(q), \\[1mm]
%	&\dot{a}  = -\lambda a,
%\end{aligned}
\end{equation}
We point out that the QSSA $\tilde{q}(\tilde{s}(t))$ from  \eqref{eq:reducedL} is not the 
same as $\tilde{q}(s(t))$ with $s(t)$ solving the full system \eqref{eq:full}, which we have studied in the previous Section~\ref{sec:qssaconditions}. However, the feedback from $\tilde{q}(\tilde{s}(t))$ to the evolution of $\tilde{s}$ in 
\eqref{eq:reducedL} is rather weak (recall $\tfrac{m_1}{2}\le -\dot{s}\le m_1$) and was therefore neglected for the sake of clarity 
in the derivation the conditions of Proposition~\ref{prop:QSSA}, which are anyway not sharp.

In the following, we improve the the approximative quality of the zero order QSSA system \eqref{eq:reducedL} by deriving higher order corrections. We will first consider the case of large $L$ and later the cases of large $V$ and $\kappa$. 
We will see that all these cases lead to somewhat different approximative systems and explain the difference in terms of the interpretation of the parameters. In the case of large $V$, we will even provided second order corrections, since 
it has been suggested that $V\sim3$ is a realistic value, which would make second order terms necessary.

\subsection{First order asymptotic expansion for $L$ large}
The parameter $L=\tfrac{s_0}{K_2}$ represents to ratio of initial TG substrate to the MM constant of DG hydrolysis. 
One could think that $s_0$ should be significantly larger than a realistic value $K_2$, which would be in favour of considering $L$ large. However, this is not so clear since ATGL and HSL are assumed bound to the surface of lipid droplets, the TG substrate which is actually available might need to be also near the surface. So effectively, the available $s_0$ might not be  larger than $K_2$.  

With $\eps=\tfrac{1}{L}$ being a small parameter, we rewrite the $q$ equation of \eqref{eq:full} as 
\begin{equation}\label{eq:fullq}
\eps \dot{q} = m_1(s) - Vd(q), \qquad \text{where}\quad d(q) = 2 \kappa q^2 + \frac{q}{1+q}. 
\end{equation}
Inserting the asymptotic expansion $q=q_0 + \eps q_1 + O(\eps^2)$ into \eqref{eq:fullq} and comparing coefficients yields in order 
$\eps^0$ yields 
\begin{equation}\label{eq:zero}
m_1(s) = V d(q_0), \qquad \Rightarrow \quad q_0(t) = \tilde{q}(s(t)), 
\end{equation}
which is the solution of the algebraic QSSA equation \eqref{QSSA}.
In order $\eps^1$, we obtain
\begin{equation}\label{eq:first}
\dot{q}_0 = - V d'(q_0) q_1, \qquad \Rightarrow \quad
q_1(t) = -\frac{\dot{\tilde{q}}}{V d'(\tilde{q})} = 
-\frac{m_1'(s) \dot{s}}{(V d'(\tilde{q}))^2},  
\end{equation}
where we have calculated $\dot{\tilde{q}}$ from differentiating $m_1(s) = V d(\tilde{q})$. 
In order to obtain an expression of $q_1$ in terms of $s$ only, we insert the $s$ equation of \eqref{eq:full} into 
the right hand side of \eqref{eq:first}, we have with $\tilde{q}=\tilde{q}(s(t))$
\begin{equation}\label{eq:qone}
q_1 = -\frac{m_1'(s) }{(V d'(\tilde{q}))^2}\left(-m_1(s) + V \kappa q^2\right) = 
-\frac{m_1'(s) }{(V d'(\tilde{q}))^2}\left(-m_1(s) + V \kappa \tilde{q}^2\right)+ O(\eps).
\end{equation}
Hence, we have as first order QSSA 
\begin{equation}
q = \tilde{q}(s(t)) - \frac{1}{L} \frac{m_1'(s)(-m_1(s) + V \kappa \tilde{q}^2)}{(V d'(\tilde{q}))^2} + O\Bigl(\frac{1}{L^2}\Bigr).
\end{equation}
%In order to obtain an expression of $q_1$ in terms of $s$ only, we insert the $s$ equation of \eqref{eq:full} into 
%the right hand side of \eqref{eq:first}
%\begin{equation*}
%q_1 = -\frac{m_1'(s) }{(V d'(\tilde{q}))^2}\left(-m_1(s) + V \kappa q^2\right) = 
%-\frac{m_1'(s) }{(V d'(\tilde{q}))^2}\left(-m_1(s) + V \kappa \tilde{q}^2 + 2 \eps V \kappa \tilde{q} q_1 + O(\eps^2)\right). 
%\end{equation*}
%Hence, we calculate 
%\begin{equation}\label{eq:qone}
%q_1 = \left[1 + 2 \eps V \kappa \frac{m_1'(s)}{(V d'(\tilde{q}))^2} \tilde{q}\right]^{-1}
%\frac{m_1'(s)}{(V d'(\tilde{q}))^2}\left(m_1(s) - V \kappa \tilde{q}^2\right) + O(\eps^2),
%\end{equation}

We are now able to derive the first order approximative system. 
Inserting eqs. \eqref{eq:zero} and \eqref{eq:first} into the $s$ equation of full system \eqref{eq:full} yields 
\begin{equation*}
\dot{s} = - m_1(s) + V \kappa\tilde{q}^2 - 2 \eps \tilde{q} \frac{m_1'(s) \dot{s}}{(V d'(\tilde{q}))^2} + O(\eps^2),
\end{equation*}
and, therefore 
\begin{equation}\label{eq:sfirst}
\dot{s} = \left(1- 2 \eps \tilde{q} \frac{m_1'(s)}{(V d'(\tilde{q}))^2}\right)\left( - m_1(s) + V \kappa\tilde{q}^2\right)  + O(\eps^2).
\end{equation}
%where $\tilde{q}=\tilde{q}(s(t))$.
Next, we insert eqs. \eqref{eq:zero} and \eqref{eq:first} into the $p$ equation of \eqref{eq:full}:
\begin{equation}\label{eq:pfirst}
\dot{p} = V \kappa{q}^2 + V \frac{q}{1+q} = V \kappa\tilde{q}^2 + V \frac{\tilde{q}}{1+\tilde{q}} 
+ \eps V \left(2 \kappa\tilde{q} + \frac{1}{(1+\tilde{q})^2}\right) q_1
+ O(\eps^2),
\end{equation}
where $q_1$ is given by \eqref{eq:qone}. 
Finally, inserting eqs. \eqref{eq:zero} and \eqref{eq:first} into the $f$ equation of \eqref{eq:full} yields
\begin{equation}\label{eq:ffirst}
\dot{f}  = m_1(s)+ V \frac{\tilde{q}}{1+\tilde{q}} + \eps V\frac{1}{(1+\tilde{q})^2} q_1 + O(\eps^2),
\end{equation}
where $q_1$ is given by \eqref{eq:qone}. Altogether, we obtain as first order QSSA model system 
\begin{equation}\label{eq:firstorderL}
\begin{cases}
\begin{aligned}
     	&\dot{s} = \left(1- 2 \eps \tilde{q} \frac{m_1'(s)}{(V d'(\tilde{q}))^2}\right)\left( - m_1(s) + V \kappa\tilde{q}^2\right),\\[0mm]
    	&\dot{p} = V \kappa\tilde{q}^2 + V \frac{\tilde{q}}{1+\tilde{q}} 
+ \eps V \left(2 \kappa\tilde{q} + \frac{1}{(1+\tilde{q})^2}\right) q_1,\\[0mm]
	&\dot{f}  = m_1(s)+ V \frac{\tilde{q}}{1+\tilde{q}} + \eps V\frac{1}{(1+\tilde{q})^2} q_1,
\end{aligned}
%\begin{aligned}
%     	&\dot{s} = -a m_1(s) + V \kappa a q^2,\\[1mm]
%    	&\dot{q} = L \left[ a m_1(s) - V \left(a m_2(q) + 2 \kappa a q^2\right) \right], \\[1mm]
%    	&\dot{p} = V \left( a m_2(q) + \kappa a q^2\right),\\[1mm]
%	&\dot{f}  = a m_1(s)+ V a m_2(q), \\[1mm]
%	&\dot{a}  = -\lambda a,
%\end{aligned}
\end{cases}
\text{where}\
\begin{cases}
m_1(s) = \frac{s}{K + s},\\[1mm]
\tilde{q} = \tilde{q}(s(t)),\\[1 mm]
q_1 =-\frac{m_1'(s) (-m_1(s) + V \kappa \tilde{q}^2)}{(V d'(\tilde{q}))^2},\\[1mm]
\eps = \frac{1}{L}.
%K = \frac{K_1}{s_0}, L = \frac{s_0}{K_2}, \\[1mm]
%V = \frac{V_2}{V_1}, \kappa = \frac{\sigma K_2^2}{V_2}.
\end{cases}
\end{equation}

\begin{figure}[!htb]
\centering
	\includegraphics[width=0.9\textwidth]{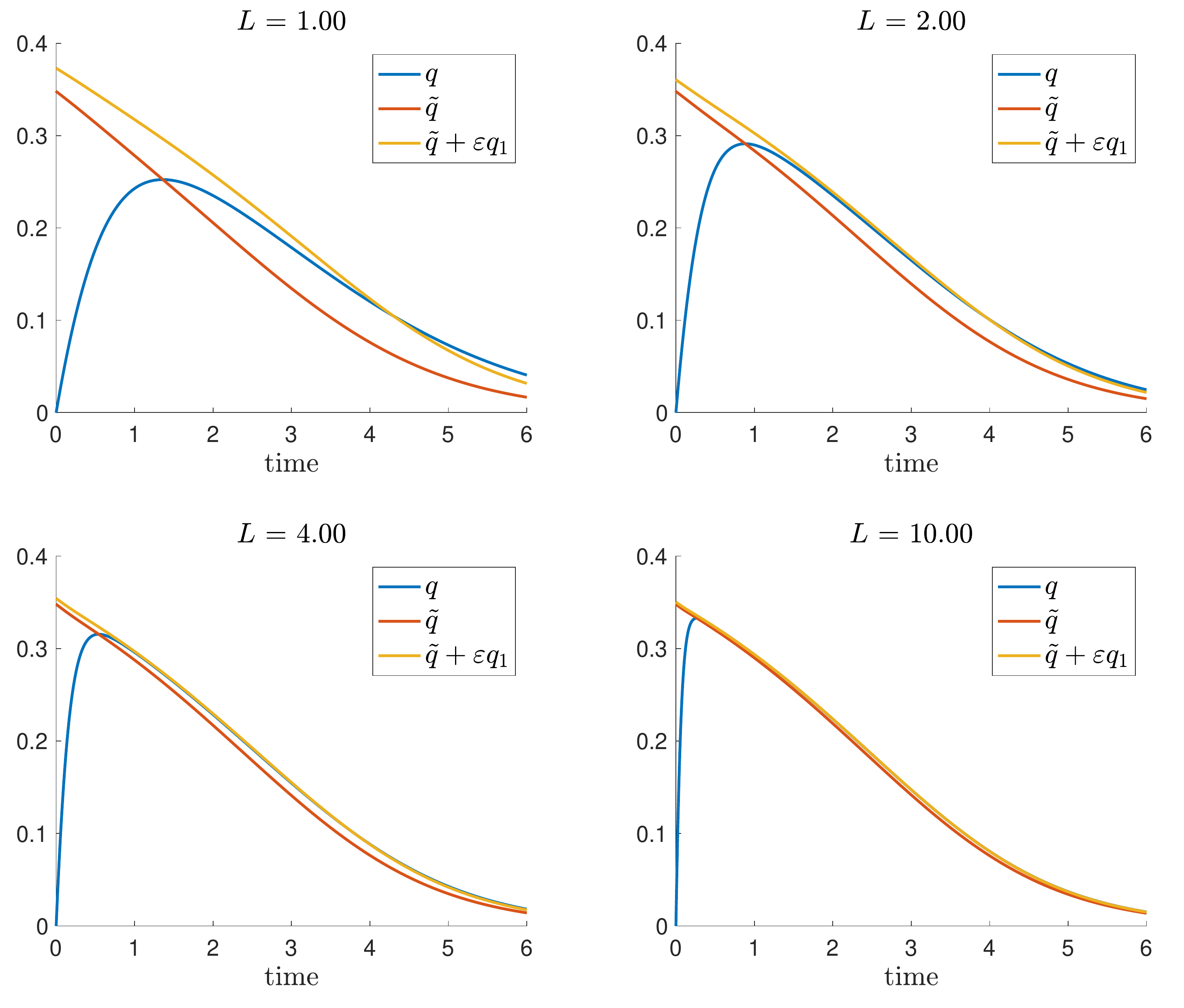}
	\caption[]{Solution $q$ of \eqref{eq:full} and zero order $\tilde{q}$ and first order QSSA  $\tilde{q}+\eps q_1$ given by \eqref{eq:firstorderL} for $L = K = \kappa = 1$, different values of $V=1,2,4,10$ and $\eps=\tfrac{1}{L}$.}
	\label{fig:reducedL}
\end{figure}

\begin{remark}
In the first order reduced model system \eqref{eq:firstorderL}, the $s$ equation is a scalar equation. 
Its solution then allows to integrate up $p$ and $f$. However, system \eqref{eq:firstorderL} depends heavily on 
$\tilde{q}(s(t))$, which is the awkward 
solution of the third order polynomial \eqref{QSSA}, which make the practical use of system \eqref{eq:sfirst}--\eqref{eq:ffirst} 
questionable.  
\end{remark}
\subsection{First and higher order asymptotic expansion for $V$ large}
We consider the case $V$ large and $V_2\gg V_1$ holds. In this parametric regime, DG hydrolysis, that is the 
loss term $m_2$ in the $q$ equation in system \eqref{eq:full}, is much faster than the inflow term $m_1$ from TG hydrolysis. Hence, $q$ is expected to be $O(\eps)$ and ought to be rescaled accordingly, i.e. $q = \eps Q$ with $\eps = 1/V$. Hence, we obtain from non-dimensionalised system \eqref{eq:full} the following rescaled model 
\begin{equation}\label{eq:fullV}
\begin{cases}
\begin{aligned}
     	&\dot{s} = -m_1(s) + \eps \kappa Q^2,\\[1mm]
    	&\eps \dot{Q} = L \bigl[ m_1(s) - \bigl(M_2(Q) + 2 \eps \kappa Q^2\bigr) \bigr], \\[1mm]
    	&\dot{p} = m_2(Q) + \eps \kappa Q^2,\\[1mm]
	&\dot{f}  = m_1(s)+ M_2(Q), 
\end{aligned}
%\begin{aligned}
%     	&\dot{s} = -a m_1(s) + V \kappa a q^2,\\[1mm]
%    	&\dot{q} = L \left[ a m_1(s) - V \left(a m_2(q) + 2 \kappa a q^2\right) \right], \\[1mm]
%    	&\dot{p} = V \left( a m_2(q) + \kappa a q^2\right),\\[1mm]
%	&\dot{f}  = a m_1(s)+ V a m_2(q), \\[1mm]
%	&\dot{a}  = -\lambda a,
%\end{aligned}
\end{cases}
\text{where}\
\begin{cases}
m_1(s) = \frac{s}{K + s},\\[1mm]
M_2(Q) = \frac{Q}{1 + \eps Q}= Q -\eps Q^2 +O(\eps^2),\\[1mm]
\eps = \frac{1}{V}.
%K = \frac{K_1}{s_0}, L = \frac{s_0}{K_2}, \\[1mm]
%V = \frac{V_2}{V_1}, \kappa = \frac{\sigma K_2^2}{V_2}.
\end{cases}
\end{equation}
Note that with $q=O(\eps)$, DG hydrolysis is in zero order approximation a linear term (since there is little substrate) and that DG transacylation is a term of order $O(\eps)$. As a consequence, the asymptotic expansion in the case 
$V$ large is simpler and more explicit than in the previous case of $L$ large. 
We insert the ansatz $Q = Q_0 + \eps Q_1 + O(\eps^2)$ into the $Q$ equation in \eqref{eq:fullV} 
and compare the coefficients to obtain
\begin{equation}\label{eq:Qzeroone}
Q_0(t) = m_1(s(t)), \qquad Q_1 = - \frac{\dot{Q}_0}{L}  + \left[ 1 - 2 \kappa \right]  Q_0^2 
= -\frac{m_1'}{L} \dot{s} + \left[ 1 - 2 \kappa \right] m_1^2(s).
\end{equation}

Inserting \eqref{eq:Qzeroone} into \eqref{eq:fullV} yields the first order QSSA system
\begin{equation}\label{eq:approxV}
\begin{cases}
\begin{aligned}
     	&\dot{s} = -m_1(s) + \eps \kappa m_1^2(s),\\[1mm]
    	&\dot{p} = m_1(s) + \eps \frac{m_1'(s) m_1(s)}{L} - \eps \kappa m_1^2(s),\\[1mm]
	&\dot{f}  = 2 m_1(s)+ \eps  \frac{m_1'(s) m_1(s)}{L} - 2\eps \kappa m_1^2(s),
\end{aligned}
%\begin{aligned}
%     	&\dot{s} = -a m_1(s) + V \kappa a q^2,\\[1mm]
%    	&\dot{q} = L \left[ a m_1(s) - V \left(a m_2(q) + 2 \kappa a q^2\right) \right], \\[1mm]
%    	&\dot{p} = V \left( a m_2(q) + \kappa a q^2\right),\\[1mm]
%	&\dot{f}  = a m_1(s)+ V a m_2(q), \\[1mm]
%	&\dot{a}  = -\lambda a,
%\end{aligned}
\end{cases}
\quad\text{where}\quad
\begin{cases}
m_1(s) = \frac{s}{K + s},\\[1mm]
\eps = \frac{1}{V}.
%K = \frac{K_1}{s_0}, L = \frac{s_0}{K_2}, \\[1mm]
%V = \frac{V_2}{V_1}, \kappa = \frac{\sigma K_2^2}{V_2}.
\end{cases}
\end{equation}

Note, that solutions to \eqref{eq:approxV} conserves total glycerol and total FAs only up to the zero order terms. In first order, the conservation laws do not hold due to the term $\eps m_1' m_1(s)$ in the equations for $\dot{p}$ and $\dot{f}$, which stems form the first term in $Q_1$ and is hence proportional to $\dot{s}$ and a results of the asymptotic expansion. 
The $s$ equation in \eqref{eq:approxV} is scalar like in system \eqref{eq:firstorderL} and has the added benefit of not requiring to solve for $\tilde{q}$.  

\begin{figure}[htb]
\centering
	\includegraphics[width=1\textwidth]{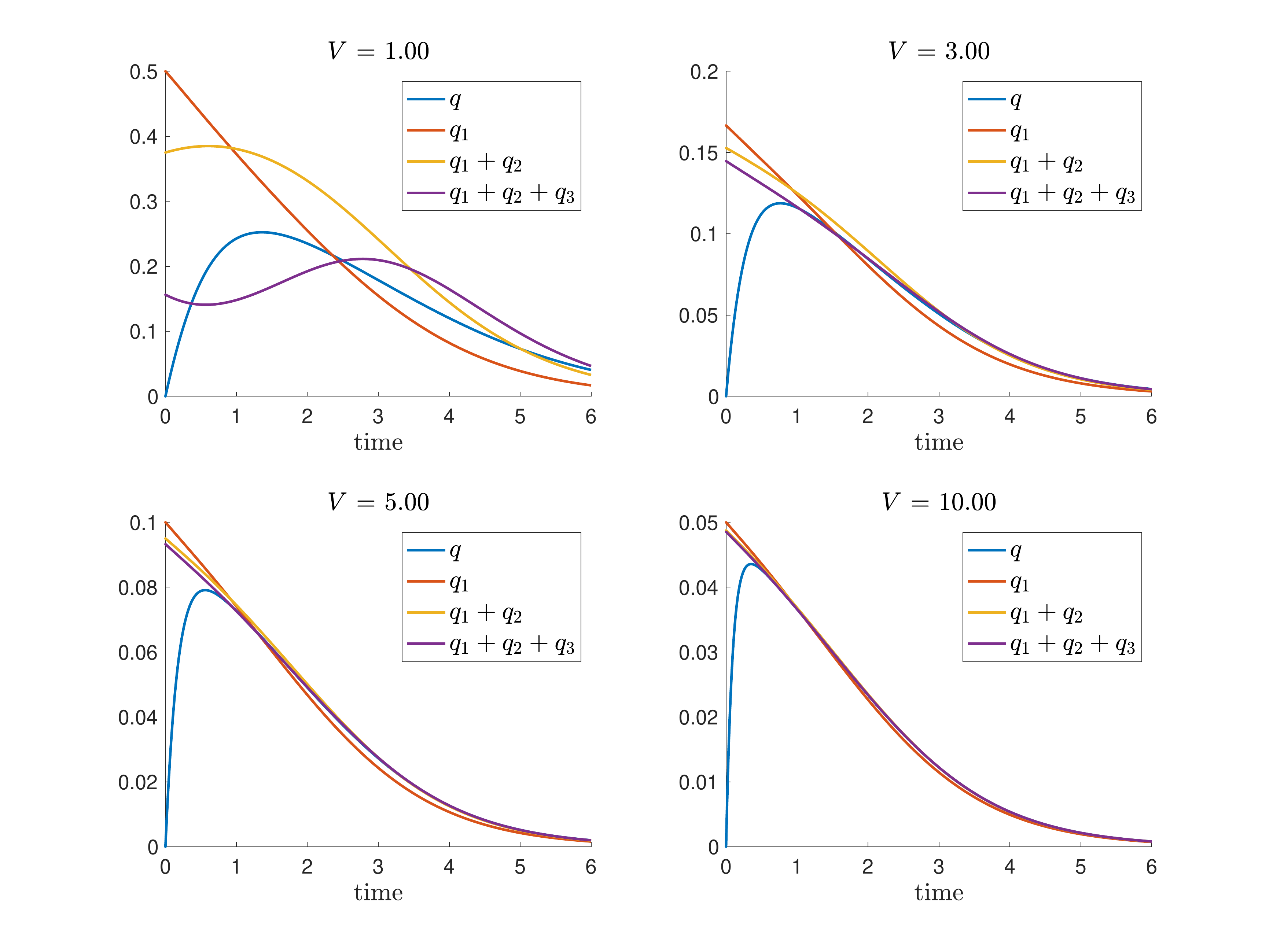}
	\caption[]{Asymptotic expansion of $q$ given by \eqref{eq:10} for different values of $V$ and fixed $L = K = \kappa = 1$.}
	\label{fig:09b}
\end{figure}

We again illustrate the asymptotic analysis by numerical simulations, which plot the asymptotic expansion of 
the QSSA quantity $q= \eps Q$ and $\eps = 1/V$. Unfortunately, as of now we are lacking experimental data, which would allow to identify $V$ from experiments. However, an educated guess suggest that values $V\sim3$ could be expected. 
Hence, we provide the asymptotic expansion of $q$ up to third order terms. Straightforward (and somewhat lengthy) 
calculations show that the following asymptotic expansion of $q$:
\begin{align} 
q &= \underbrace{\frac{m_1(s)}{V}}_{=: q_1 }  + \underbrace{\frac{m_1(s)}{V^2}  \left( - (2\kappa -1) m_1(s) + \frac{1}{L} m_1'(s)\right)}_{=: q_2 } \hspace{5cm} & \nonumber \\  
&\quad + \underbrace{ \frac{m_1(s)}{V^3} \left( \frac{4-9\kappa }{L} m_1'(s) m_1(s) + (2(2\kappa-1)^2-1)m_1^2(s) + \frac{m_1''(s) m_1(s) + (m_1'(s))^2}{L^2}\right)}_{=: q_3}\nonumber \\
&\quad  + O(1/V^4) \label{eq:10} 
\end{align}
\begin{remark}
Note that in the asymptotics of large $V$, the zero order approximation $q_0=0$, which follows from expanding 
the rescaled system \eqref{eq:fullV}, but could have already been see from the $q$ equation in \eqref{eq:full}: 
There, for large $V$ follows already that $q=O\bigl(\tfrac{1}{V}\bigr)$ holds.
\end{remark}

Figures \ref{fig:09b} plots $q(t)$ as part of the solution of \eqref{eq:full} and shows in comparison the approximations by $q_1$, $q_1 + q_2$ and $q_1 + q_2 + q_3$ given by \eqref{eq:10} for parameters $L = K = \kappa = 1$ and where $s(t)$ is part of the solution of \eqref{eq:full}. We see that already for $V=3$, the solution $q$ and the third order approximation $q_1 + q_2 + q_3$ 
are in very good agreement after the initial layer, while this is not really true for the first and second order approximation. 
Even better approximation is obtained fir larger values of $L$.

% \begin{figure}[!htb]
%\centering
%	\includegraphics[width=1\textwidth]{img/Fig_09_AsympExp_V_02}
%	\caption[]{Asymptotic expansion of $q$ given by \eqref{eq:10} for different values of $V$ and fixed $l = 3$ and $K = \kappa = 1$.}
%	\label{fig:09c}
%\end{figure}

\subsection{First order asymptotic expansion of $q$: $\kappa$ large}
We consider the case $\kappa$ large. This is a parametric regime where $\sigma K_2^2\gg V_2$ holds and DG transacylation, which is a loss term in the $q$ equation in \eqref{eq:full} is a fast process. 
Like in the case of $V$ large, $q$ is again expected to be small. Given the quadratic nonlinearity of transacylation, it turns out that the correct rescaling is $q = \eps Q$ with $\eps = 1/\sqrt{\kappa}$. By accordingly rescaling the non-dimensionalised system \eqref{eq:full}, we obtain 
\begin{equation}\label{eq:fullkappa}
\begin{cases}
\begin{aligned}
     	&\dot{s} = -m_1(s) +  V Q^2,\\[1mm]
    	&\eps \dot{Q} = L \left[ m_1(s) - V \left(M_2(Q) + 2 Q^2\right) \right], \\[1mm]
    	&\dot{p} = V \left(M_2(Q) +  Q^2\right),\\[1mm]
	&\dot{f}  = m_1(s)+ V M_2(Q), 
\end{aligned}
%\begin{aligned}
%     	&\dot{s} = -a m_1(s) + V \kappa a q^2,\\[1mm]
%    	&\dot{q} = L \left[ a m_1(s) - V \left(a m_2(q) + 2 \kappa a q^2\right) \right], \\[1mm]
%    	&\dot{p} = V \left( a m_2(q) + \kappa a q^2\right),\\[1mm]
%	&\dot{f}  = a m_1(s)+ V a m_2(q), \\[1mm]
%	&\dot{a}  = -\lambda a,
%\end{aligned}
\end{cases}
\quad\text{where}\quad
\begin{cases}
m_1(s) = \frac{s}{K + s},\\[1mm]
M_2(Q) = \frac{\eps Q}{1 + \eps Q}= \eps Q + O(\eps^2)\\[1mm]
\eps = \frac{1}{\sqrt{\kappa}}.
%K = \frac{K_1}{s_0}, L = \frac{s_0}{K_2}, \\[1mm]
%V = \frac{V_2}{V_1}, \kappa = \frac{\sigma K_2^2}{V_2}.
\end{cases}
\end{equation}
Note that the this rescaling DG transacylation is dominant and or order $O(1)$ while DG hydrolysis is a $O(\eps)$ term and linear up to order $O(\eps^2)$ (since the substrate concentration $q$ is small). Hence, the asymptotic expansion in this case is again different to the cases $V$ or $L$ large. 

We apply the asymptotic expansion $Q = Q_0 + \eps Q_1 + O(\eps^2)$. Insertion into \eqref{eq:fullkappa}
and comparison of the coefficients yields 
\begin{equation}\label{eq:Qzeroonekappa}
Q_0(t) =  \sqrt{\frac{m_1(s(t))}{2V}}, \qquad \text{and}\qquad Q_1 = - \frac{1}{4}-\frac{1}{8 L V} \frac{m_1'(s)}{m_1(s)} \dot{s}  = - \frac{1}{4}
-\frac{m_1'(s)}{16 L V} +O(\eps),  
\end{equation}
where we expressed $\dot{s}$  by means of the $s$ equation in \eqref{eq:fullkappa}.
Inserting \eqref{eq:Qzeroonekappa} into \eqref{eq:fullV} yields the first order approximative system
\begin{equation}\label{eq:approxkappa}
\begin{cases}
\begin{aligned}
     	&\dot{s} = -\frac{m_1(s)}{2}  - \eps \sqrt{\frac{V m_1(s)}{8}} + \eps\frac{m_1^{\prime}(s)}{8L}\sqrt{\frac{m_1(s)}{2  V}} + O(\eps^2),\\[1mm]
    	&\dot{p} = \frac{m_1(s)}{2}  + \eps\sqrt{\frac{V m_1(s)}{8}} + \eps\frac{m_1^{\prime}(s)}{8L}\sqrt{\frac{m_1(s)}{2 V}} +   O(\eps^2).\\[1mm]
	&\dot{f}  = m_1(s)+ \eps \sqrt{\frac{V m_1(s)}{2}}+ O(\eps^2),
\end{aligned}
%\begin{aligned}
%     	&\dot{s} = -a m_1(s) + V \kappa a q^2,\\[1mm]
%    	&\dot{q} = L \left[ a m_1(s) - V \left(a m_2(q) + 2 \kappa a q^2\right) \right], \\[1mm]
%    	&\dot{p} = V \left( a m_2(q) + \kappa a q^2\right),\\[1mm]
%	&\dot{f}  = a m_1(s)+ V a m_2(q), \\[1mm]
%	&\dot{a}  = -\lambda a,
%\end{aligned}
\end{cases}
\quad\text{where}\quad
\begin{cases}
m_1(s) = \frac{s}{K + s},\\[1mm]
\eps = \frac{1}{\sqrt{\kappa}}.
%K = \frac{K_1}{s_0}, L = \frac{s_0}{K_2}, \\[1mm]
%V = \frac{V_2}{V_1}, \kappa = \frac{\sigma K_2^2}{V_2}.
\end{cases}
\end{equation}

 \begin{figure}[tb]
\centering
	\includegraphics[width=1\textwidth]{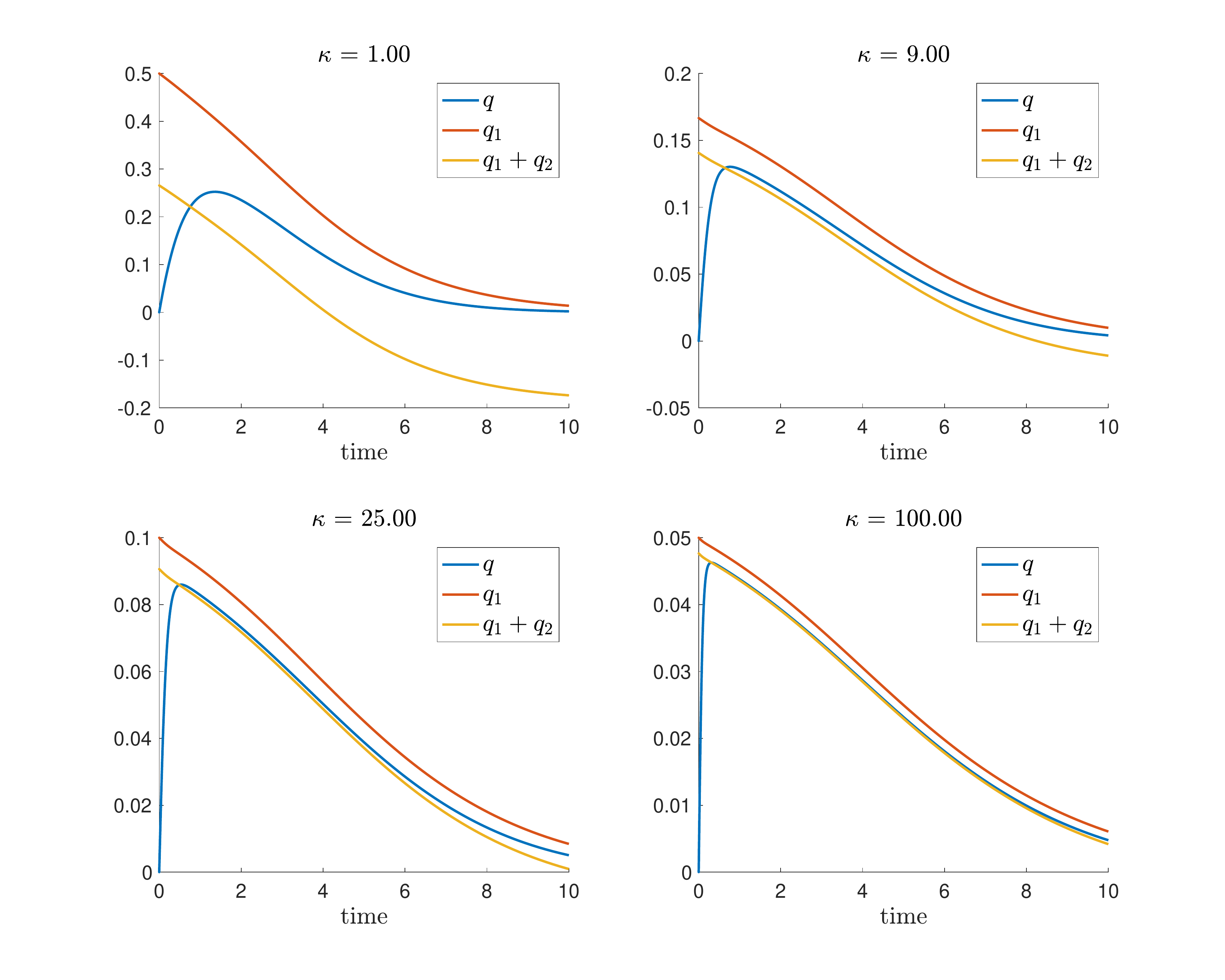}
	\caption[]{Asymptotic expansion of $q$ given by \eqref{eq:qkappa} for different values of $\kappa$ and fixed $L= K = V = 1$.}
	\label{fig:09f}
\end{figure}

We illustrate the asymptotic analysis by numerical simulations, which plot the asymptotic expansion of 
the QSSA quantity $q= \eps Q$ and $\eps = 1/\sqrt{\kappa}$. Rescaling \eqref{eq:Qzeroonekappa}
back yields  the following asymptotic expansion of $q$  up to second order terms, where (like in the case $V$ large)
the zero order approximation $q_0=0$:  
\begin{equation}\label{eq:qkappa} 
q = \underbrace{\frac{1}{\sqrt{\kappa}}\sqrt{\frac{m_1(s)}{2V}}}_{ =: q_1} + \underbrace{\frac{1}{\kappa}\left(\frac{m_1'(s)}{16 L V} - \frac{1}{4}\right)}_{=: q_2} + O\biggl(\frac{1}{\kappa^{3/2}}\biggr). 
\end{equation}
Figures \ref{fig:09f} plots $q(t)$ as part of the solution of \eqref{eq:full} and shows in comparison the approximations by $q_1$ and $q_1 + q_2$ given by \eqref{eq:qkappa} for parameters $L = K = V = 1$ and where $s(t)$ is part of the solution of \eqref{eq:full}. We see very good agreement only vor very large values like $\kappa=100$, which reflects that the scaling 
factor is $\sqrt{\kappa}$. Already with $\kappa~25$, the second order QSSA $q_1 + q_2$ is not longer a very good approximation.

% \begin{figure}[!htb]
%\centering
%	\includegraphics[width=1\textwidth]{img/Fig_09_AsympExp_kappa_02}
%	\caption[]{Approximation of $s$ and $p$ by the solutions to Eqs. \eqref{eq:18} denoted by $s_1$ and $p_1$ for different values of $\kappa$ and fixed $l = K = V = 1$. 
%	}
%	\label{fig:09g}
%\end{figure}

\section{Conclusion and outlook}\label{sec:conclusion}
Intracellular lipolysis is a central catabolic pathway involved in a variety of cellular processes including energy production, signal transduction, and lipid remodelling. The main lipolytic enzymes (i.e. ATGL, HSL, and MGL) have been functionally characterised in great detail. Over the last decades, gain- and loss-of-function studies in different model organisms corroborated their crucial roles in neutral lipid catabolism \cite{Grabner-2021, Schreiber-2019}. Nevertheless, lipolysis is far from being completely understood. We are convinced that the classical view of lipolysis as a linear process involving the consecutive action of ATGL, HSL, and MGL is too simplistic. On the one hand, published data suggest that lipolysis is an enzymatically redundant process (ATGL not only hydrolyses TGs but also DGs \cite{Zimmermann-2004}, \textit{vice versa} HSL not only hydrolyses DGs but also TGs and MGs \cite{Fredrikson-1981}). On the other hand, three independent studies convincingly showed that ATGL also exhibits DG transacylation activity at least \textit{in vitro} \cite{Jenkins-2004, Zhang-2019, Kulminskaya-2021}, indicating that intracellular lipolysis is a non-linear process. The \textit{in vivo} relevance of DG transacylation is still elusive though. 

Kinetic parameters of model system \eqref{eq:02a} are currently not available.  In particular, the parameters $\kappa$ and $V$ for DG transacylation and DG hydrolysis are unknown since these two functions can not be distinguished by any current experimental protocol. In a work in progress, 
we are developing mathematical approaches to identify kinetic parameters from experimental data using minimisation of tracking functional as well as Bayesian estimators. 
First preliminary results of parameter identification (yet for more specific mathematical models than system \eqref{eq:02a}) suggest values of $\kappa$ and $V$, which put the system dynamics somewhere near the middle of the parametric plots Figs.~\ref{fig:50proa} and 
\ref{fig:50prob}. A plausible guess suggests $V$ somewhat larger that one and DG transacylation on average maybe 20\% of DG hydrolysis, but with a strong dependence on the substrate and an increase up to 50\% in some experiments. 

For these parameter values, the effects of DG transacylation as shown in Figs.~\ref{fig:50prob} are rather neutral: A slight delay in substrate processing time but negligible effects on the time of MG and FA production compared to the system without DG transacylation. 
This might be a reason why some researchers believe that DG transacylation does not play a significant role \textit{in vivo}: 
Not because it is not active, but because it behaves rather neutrally. 

However, if the hydrolytic machinery is shifted away from these 
conditions, DG trans\-acylation will play a more significant role. First, if 
DG hydrolysis is lacking, DG transacylation serves as a failsafe mechanism by producing MGs and even indirectly FAs
when TGs formed are hydrolysed further to DGs.
However, this failsafe mechanism comes at a price: 
If DG hydrolysis is present, a highly effective DG transacylation   
will compete with DG hydrolysis. This will cause downstream delays by forcing the lipolysis cascade TG $\to$ DG $\to$ MG rather into a semi-open loop of TG $\to$ DG $\to$ 1/2 TG $\to$ 1/2 DG $\to$ 1/4 TG etc, see e.g. 
Fig~\ref{fig:50prob} and it's illustration of the downstream delays.

It seems highly remarkable that our current preliminary guess on parameter values places the kinetics of the lipolytic machinery in the middle of these two options: failsafe mechanism versus DG substrate competition. One can speculate that the lipolytic machinery  evolved to operate at a flexible state rather than at the maybe more efficient state without DG transacylation. We interpret our observations that there is biological purpose behind both DG hydrolysis and DG transacylation constituting the lipolysis machinery. 
\medskip

As a second biological purpose, we can speculate that various effects of DG transacylation have the ability to homogenise the process of 
lipid hydrolysis over the surface of lipid droplets.
We recall that the ratio of DG transacylation to DG hydrolysis depends on the DG concentration levels and that DG transacylation becomes dominate for sufficiently large concentrations of DGs.  
The enzyme localisation on the surface of LDs can be temporally and spatially heterogeneous \cite{Egan-1992, Granneman-2009, Sztalryd-2003}. Local absence of HSL would lead to 
local DG accumulation as DGs can't leave the LD. It can by speculated that DG transacylation may serve as  a regulator of spatial heterogeneities of 
DG concentrations on the surface of the lipid droplet by processing local surpluses (and without having to wait for diffusion to spread DGs).
Hence, DG transacylation would contribute to an overall more homogeneous and controlled downstream processing, 
which is certainly an important physiological factor. However, ATGL-mediated DG transacylation averaged over the entire surface of the lipid droplet cannot fully compensate for complete HSL deficiency \textit{in vivo}, since HSL knockout mice accumulate DGs in adipose and non-adipose tissues \cite{Haemmerle-2002}. 

\medskip

There are many potential directions of future research. From a mathematical perspective, a rigorous proof 
of the convergence of the fast-slow ODE systems to the limiting QSSA is work in progress. 
The proof requires a sufficiently sharp perturbation argument 
for nonlinear, non-autonomous ODE systems. Unfortunately, estimations of the fundamental matrix of non-autonomous systems like in \cite[Chapter 3]{teschlode} are currently insufficient, which we strongly consider to be a technical artefact rather than the results of the 
system behaviour. As a by-pass, we are currently trying to find a suitable change of variables, which might allow to improve the estimates on
the non-autonomous semigroup. 
\medskip

From a modelling and interdisciplinary perspective, a central goal is the identification of the lipolytic kinetic parameters from experimental data. 
In particular, we aim to understand the dynamics of TG breakdown in adipocytes via cell-based assays and mice experiments. 
However, we will have to work with a limited number of data points both in adipocyte cell assays and even less in mice experiments. We expect that reduced QSSA models like presented in Section~\ref{sec:asymptotics} will be important in order to 
fit parameter from \textit{in vivo} data. 

There is, however, an additional mathematical question concerning the use of QSSAs. As part of the process of 
mathematical modelling, a sensitivity analysis of the parameter  is an important step. 
In particular, we are studying parameter sensitivity derivatives and how they evolve along solutions of the system. 
Parameter sensitivity derivatives provided very useful insights into the qualitative behaviour of the system and also into how challenging  the inverse problem of identifying parameters from measurements will be. 

In the spirit of this paper, which is to discuss the role of DG transacylation, the following system considers  the sensitivity derivatives with respect to the parameter $\kappa$:

\begin{equation}\label{eq:03}
\begin{cases}
\begin{aligned}
     	&\left(\frac{ds}{d\kappa}\right)^{\displaystyle \cdot} = -m_1'(s) \frac{ds}{d\kappa}  + V q^2 + 2V\kappa q \frac{dq}{d\kappa},\\
    	&\left(\frac{dq}{d\kappa}\right)^{\displaystyle \cdot} = L \left( m_1'(s) \frac{ds}{d\kappa} - 2V q^2 - V \left( m_2'(q) + 4 \kappa q \right) \frac{dq}{d\kappa} \right), \\
    	&\left(\frac{dp}{d\kappa}\right)^{\displaystyle \cdot} = V \left( q^2 + \left( m_2'(q) + 2 \kappa q \right) \frac{dq}{d\kappa} \right), \\
	&\left(\frac{df}{d\kappa}\right)^{\displaystyle \cdot}  = \left( m_1'(s) \frac{ds}{d\kappa} + V m_2'(q) \frac{dq}{d\kappa} \right), \\
\end{aligned}
\end{cases}
\end{equation}
subject to homogeneous initial data
\[ \frac{ds}{d\kappa} \Big\vert_{t = 0} = \frac{dq}{d\kappa} \Big\vert_{t = 0} = \frac{dp}{d\kappa} \Big\vert_{t = 0} = \frac{df}{d\kappa} \Big\vert_{t = 0} = 0. 
\]

Note that, for instance, if $\frac{ds}{d\kappa}$ is positive, then the dynamics observed for increased values of $\kappa$ will show also increased values of $s$. 

In Section~\ref{sec:asymptotics}, we have derived simplified QSSAs models. An interesting questions compares the parameter sensitivity derivatives of the full system 
\eqref{eq:03} with the the parameter sensitivity derivatives obtained from the QSSA. Note that, the QSSA $\dot{q} = 0$ implies $\left( dq / d\kappa \right)^{\displaystyle \cdot} = 0$ and, thus, $\left( dq / d\kappa \right) = 0$ for all times. Hence, from the second equation in \eqref{eq:03} we obtain an algebraic equation for the sensitivity of $\tilde{q}$ with respect to $\kappa$
\begin{equation}\label{eq:06} 
\frac{d \tilde{q}}{d\kappa}  = \frac{1}{\mu(\tilde{q})} \left( \frac{m_1'(\tilde{s})}{V} \frac{d\tilde{s}}{d\kappa} - 2 \tilde{q}^2\right),
\qquad\text{where}\quad \mu(\tilde{q}) := m_2'(\tilde{q}) + 4 \kappa \tilde{q}>0.
\end{equation}
We remark that \eqref{eq:06} follows equally from differentiating \eqref{QSSA} with respect to $\kappa$. By substituting \eqref{eq:06} into the equation for $ds/d\kappa$, we get in the QSSA system that
\[\begin{aligned}
\left(\frac{d \tilde{s}}{d\kappa}\right)^{\displaystyle \cdot} & = - m_1'(\tilde{s})\left( 1 - \frac{2\kappa\tilde{q}}{\mu(\tilde{q})}\right)\frac{d \tilde{s}}{d\kappa} + V\tilde{q}^2 \left( 1 -  \frac{4\kappa\tilde{q}}{\mu(\tilde{q})}\right) \\
& = -\underbrace{m_1'(\tilde{s})}_{\in [1/(K+1)^2,1/K^2]}\underbrace{\frac{2\kappa\tilde{q} + m_2'(\tilde{q})}{\mu(\tilde{q})}}_{\in [1/2,1]} \frac{d \tilde{s}}{d\kappa} + V\underbrace{\tilde{q}^2 \frac{m_2'(\tilde{q})}{\mu(\tilde{q})}}_{\in [0, 1/(8\kappa)]}
\end{aligned}\]
Since $d \tilde{s}/d\kappa = 0$ at $t = 0$ and $\tilde{q} > 0$, we deduce that 
\[ \left( \frac{d \tilde{s}}{d\kappa}\right)^{\displaystyle \cdot}  \approx  \frac{V \tilde{q}^2}{1 + 4\kappa\tilde{q} (1+\tilde{q})^2} > 0
\]
for small times.
Therefore $d \tilde{s}/d\kappa$ grows initially until the negative linear term dominates, which altogether implies that $d \tilde{s}/d\kappa$ remains positive for all times. 
We deduce from differentiating the conservation law \eqref{eq:02c} that
\[ 
\frac{d \tilde{p}}{d\kappa} = - \frac{d \tilde{s}}{d\kappa}
\]
and $d \tilde{p}/d\kappa$ is negative for all times, initially decreases until eventually approaches 0, see Fig.~\ref{fig:04e}

\begin{figure}[tb]
\centering
	\includegraphics[width=0.9\textwidth]{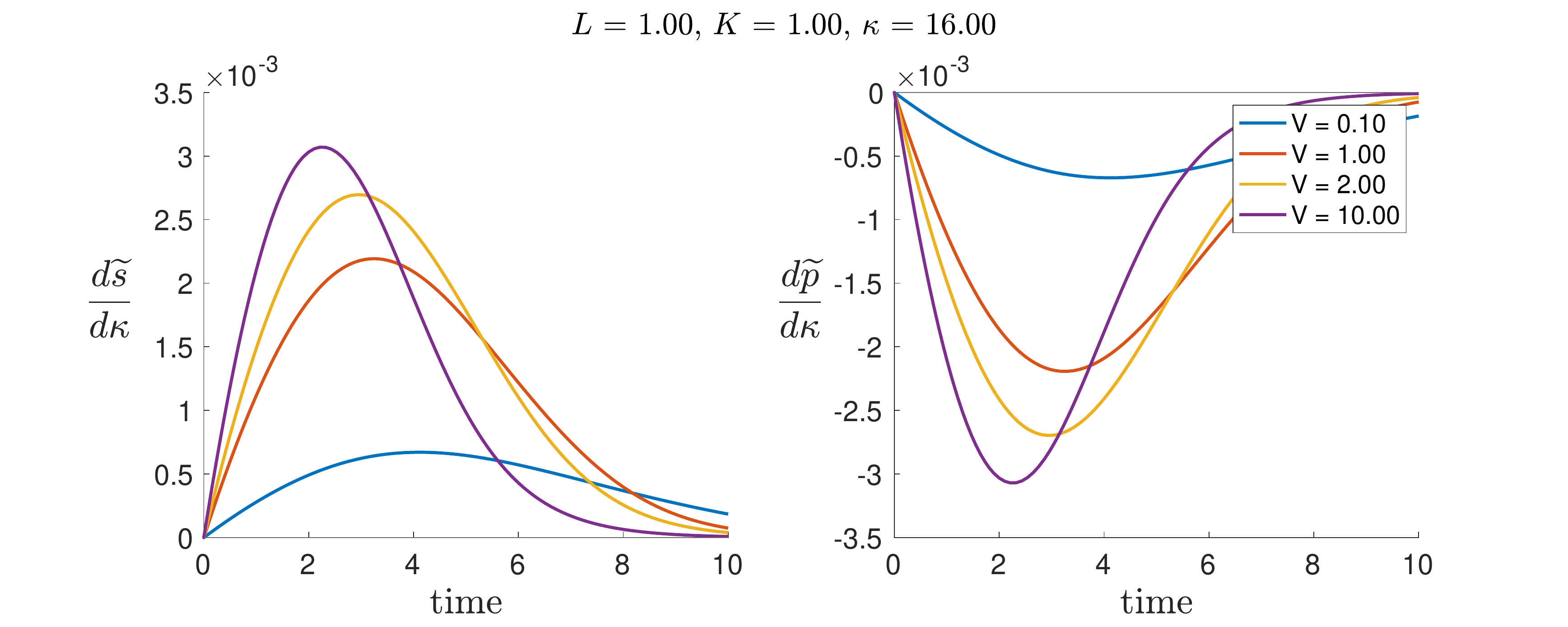}
	\caption[]{Other parameters: $L = K = 1$, $\kappa = 16$.}
	\label{fig:04e}
\end{figure}

Figures~\ref{fig:04e} plots the parameter sensitivity derivatives 
$d \tilde{s}/d\kappa$ and $d \tilde{p}/d\kappa$
of the QSSA model for the parameters $L=K=1$ and $\kappa=16$, 
which illustrates this analysis for different values of $V=0.1,1,2,10$.

\begin{figure}[!htb]
\centering
	\includegraphics[width=1\textwidth]{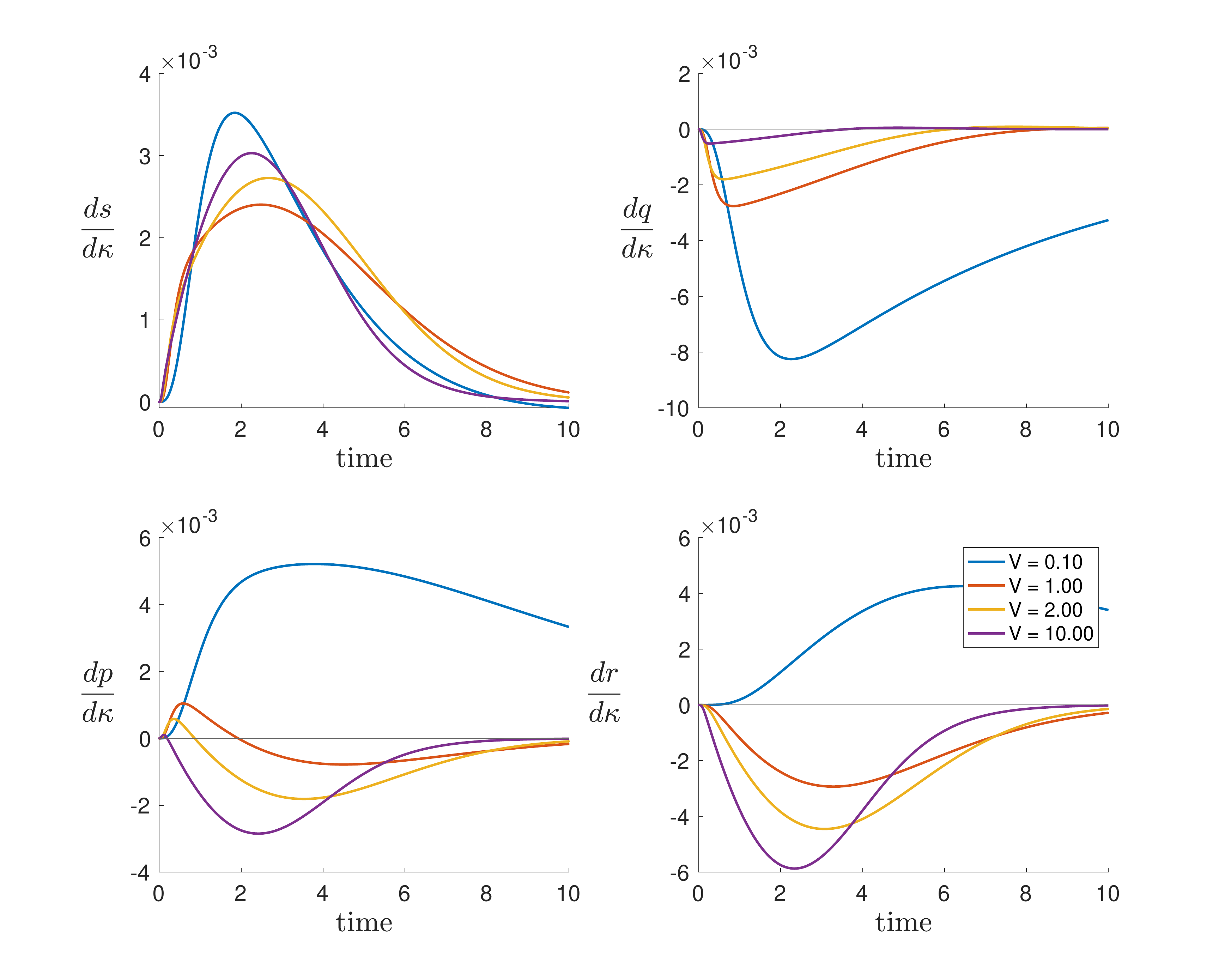}
	\caption[]{Other parameters: $L = K = 1$, $\kappa = 16$.}
	\label{fig:04a}
\end{figure}

In comparison, Figures~\ref{fig:04a} plots the evolution of the $\kappa$-sensitivity derivatives for the full system \eqref{eq:03} for parameters $L = K = 1$, $\kappa = 16$ and the same values of $V=0.1,1,2,10$. We observe that the qualitative behaviour of $\tfrac{ds}{d\kappa}$ and 
$\tfrac{d \tilde{s}}{d\kappa}$ corresponds well. 
However, in the cases $V=0.1,1,2$, the product sensitivity derivative 
$\frac{d \tilde p}{d\kappa}$ of the full system \eqref{eq:03} is positive for some 
initial time interval. This is in contrast to our analysis of the QSSA system, 
where $\frac{d \tilde p}{d\kappa}\le0$ holds. Hence, there are parameters for which the full system predicts an increase of MG production due to increased DG transacylation when the QSSA system predicts a decrease of MG production due to 
increased DG transacylation. In particular, this contradiction is observed for parameter value $V\le2$, where for $V=2$ Fig.~\ref{fig:QSSA} plots (for the same parameters $L=K=1$ and $\kappa=16$) a seemingly very good approximation of $q$ by $\tilde{q}$. 
This example of disagreement between full and QSSA model suggests that an analysis of validity of a QSSA should 
also include the sensitivity derivatives.
\bigskip

%A phase portrait of $S = ds/d\kappa$ and $Q = dq/d\kappa$ is shown in Figure~\ref{fig:PP:SQ}.
%     
%\begin{figure}[!htb]
%\centering
%	\includegraphics[width=1\textwidth]{img/Fig_PP_QS_01e_K1_l1_kappa16}
%	\caption[]{$S$--$Q$ phase portrait: $Q_{min} = q_{max}/(2\kappa + m_2'(q_{max})/(2q_{max}))$ where $q_{max} = \max_{t \ge 0} q(t)$. Other parameters: $L = K = 1$, $\kappa = 16$.}
%	\label{fig:PP:SQ}
%\end{figure}

\noindent{\bf Acknowledgements.}\hfill

This work is supported by 
SFB Lipid hydrolysis (F 73) funded by the Austrian Science Fund FWF, 
by the International Research Training Group IGDK 1754 and NAWI Graz.
The authors greatfully acknowledge discussions with Natalia Kulminskaya.
\bibliography{references}

\end{document}